\newcommand{\Rn}{{\mathbf R}^{n}}
\newcommand{\Rq}{{\mathbf R}^{4}}
\newcommand{\Rt}{{\mathbf R}^{3}}
\newcommand{\Rd}{{\mathbf R}^{2}}
\newcommand{\de}{\text{\rm d}\mspace{1mu}}
\newcommand{\fff}{\boldsymbol{I}}
\newcommand{\sff}{\mathbf{I} \mathbf{I}}
\newcommand{\He}{\text{Hess}\mspace{1mu}}
\newcommand{\rank}{\rm rank\,}
\newcommand{\beq}{\begin{equation}}
\newcommand{\eeq}{\end{equation}}
\newcommand{\ba}{\begin{align}}
\newcommand{\ea}{\end{align}}
\newcommand{\baa}{\begin{align*}}
\newcommand{\eaa}{\end{align*}}
\newcommand{\lra}{\longrightarrow}
\newtheorem*{theo}{Theorem}
\newtheorem{theorem}{Theorem}
\newtheorem{lemma}{Lemma}
\newtheorem{cor}{Corollary}
\newtheorem{prop}{Proposition}
\newtheorem*{lnf}{Local Normal Form}
\theoremstyle{definition}
\newtheorem{defn}{Definition}
\newtheorem{ex}{Example}
\theoremstyle{remark}
\newtheorem{rem}{Remark}
\begin{document}

\title{The Gauss map for Lagrangean and isoclinic surfaces}

\author{J. Basto-Gon\c calves}
\date{\today}
\address{Centro de Matem\'atica da Universidade do
Porto, Portugal}
\email{jbg@fc.up.pt}
\thanks{The research of the  author at Centro de Matem\'atica da Universidade do Porto (CMUP) was
funded by the European Regional Development Funding FEDER through the programme COMPETE and by the Portuguese Government through the FCT -- Funda\c{c}\~ao para a Ci\^encia e a
Tecnologia under the project PEst-C/MAT/UI0144/2011, and  also by Calouste Gulbenkian
Foundation}

\subjclass{Primary:}
\keywords{}

\begin{abstract}
Simple properties of the Gauss map characterise important classes of surfaces in $\Rq$: $R$-surfaces, the real version of plane complex curves; Lagrangean surfaces; isoclinic surfaces.
\end{abstract}
\maketitle

\section{Introduction}

The origin of the study of surfaces  in $\Rq$ was the interpretation of a complex plane curve as a real surface. In a different context, an invariant torus for a two degrees of freedom integrable Hamiltonian system is also a surface  in $\Rq$.

 These surfaces have very special properties, and our aim is to consider a bigger class of surfaces, containing all those, but retaining some of their properties, at least in a partial form.

If a surface is a real version of a complex plane curve, which we will call a $R$-surface \cite{kom}, there are three properties upon which we will focus:
\begin{itemize}
\item All planes are isoclinic to each other
\item The Gauss map has a constant component, in sphere coordinates.
\item The surface is a Lagrangean submanifold of $\Rq$.
\end{itemize}

When considering two planes $P$ and $P'$ in $\Rq$, passing though the origin, we can take the angle that a unit vector $u$ in $P$ makes with its orthogonal projection $u'$ in $P'$; when $u$ describes circumference of radius 1 centred at the origin, that angle varies between two extreme values, in general different. The planes are said to be isoclinic to each other if in fact the angle remains constant.

This can be seen in different way: the unit circumference in $P$ projects as an ellipse in $P'$, and the axes correspond to the extreme values of the angle above; thus the planes are isoclinic to each other when the ellipse is a circumference.

The Gauss map associates to a point in the surface its tangent plane seen as a vector subspace of $\Rq$; this is a point in the Grassmannian $G_{2,4}$. It is well known that $G_{2,4}\cong \mathbf S^2\times \mathbf S^2$, thus the Gauss map is a pair of maps into spheres, and for a $R$-surface one of them is constant.

To define a Lagrangean submanifold we need to specify a symplectic structure, and in fact there are two natural choices for a $R$-surface, as will be seen in subsection~\ref{Rsurf}.

We will consider a  $L$-surface as a surface that is congruent to a Lagrangean surface with the standard symplectic form, and an $I$-surface, or isoclinic surface, a surface for which the Gauss curvature and the normal curvature coincide, up to sign. The $I$-surfaces can also be characterized for having an isoclinic tangent direction at every point \cite{w}.

Under very general hypotheses, given an initial curve, the Gauss map $\Gamma$ allows the reconstruction of the surface which contains the curve and whose Gauss map is $\Gamma$ \cite{weiner}. This involves solving a partial differential equation.

Our aim here is just the characterization of the surface by simple properties of the Gauss map, and the main result is:

\begin{theo}
A surface $S\subset \Rq$ with Gauss map $\Gamma : S\lra G_{2,4}\cong \mathbf S^2\times \mathbf S^2$ is a $L$-surface if, and only if,  the image of $\Gamma_1$, or  $\Gamma_2$, be contained in a great circle.
\end{theo}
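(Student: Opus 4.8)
The plan is to make the isomorphism $G_{2,4}\cong\mathbf S^2\times\mathbf S^2$ completely explicit through the splitting of bivectors into self-dual and anti-self-dual parts, and then to read off the Lagrangean condition as a single linear constraint on one of the two spherical factors. Writing $\Lambda^2\Rq=\Lambda^+\oplus\Lambda^-$ for the $\pm1$ eigenspaces of the Hodge star, each an orthogonal copy of $\Rt$, a unit decomposable bivector $\omega=u\wedge v$ representing an oriented tangent plane decomposes as $\omega=\omega^++\omega^-$ with $\md{\omega^+}=\md{\omega^-}=1/\sqrt2$; the two components of the Gauss map are the normalisations $\Gamma_1=\sqrt2\,\omega^+$ and $\Gamma_2=\sqrt2\,\omega^-$, which is the content of the identification recalled in the introduction.

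Next I would locate the standard symplectic form inside $\Lambda^2\Rq$. Under the metric identification of $2$-forms with bivectors, and with the orientation coming from the complex structure on $\Cd$, the form $\omega_{\mathrm{std}}=dx_1\wedge dy_1+dx_2\wedge dy_2$ corresponds to $e_1\wedge e_2+e_3\wedge e_4$, which is fixed by the Hodge star; it is self-dual, hence a fixed nonzero vector $w_0\in\Lambda^+$. For a tangent plane represented by $\omega$ the restriction of $\omega_{\mathrm{std}}$ is the pairing $\langle w_0,\omega\rangle=\langle w_0,\omega^+\rangle$, proportional to $\langle w_0,\Gamma_1\rangle$. The plane is therefore Lagrangean for $\omega_{\mathrm{std}}$ exactly when $\Gamma_1\perp w_0$, i.e. when $\Gamma_1$ lies on the great circle of $\mathbf S^2\subset\Lambda^+$ cut out by $w_0$. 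Thus a surface is Lagrangean for the standard form if and only if the image of $\Gamma_1$ lies in this one distinguished great circle.

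To pass from ``this particular great circle'' to ``some great circle'' I would use congruence. An orientation-preserving isometry of $\Rq$ acts on $\Lambda^+$ and on $\Lambda^-$ by the two homomorphisms $SO(4)\lra SO(3)$, each of which is surjective and, through $\mathrm{Spin}(4)\cong SU(2)\times SU(2)$, can be prescribed independently on the two factors. Since $SO(3)$ acts transitively on the great circles of $\mathbf S^2$, any great circle carrying the image of $\Gamma_1$ can be rotated onto the distinguished one by a congruence, with no constraint imposed on the $\Lambda^-$ factor. This yields the chain: the image of $\Gamma_1$ is contained in a great circle $\Llra$ $S$ is congruent to a surface Lagrangean for $\omega_{\mathrm{std}}$ $\Llra$ $S$ is an $L$-surface. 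The statement for $\Gamma_2$ is the same computation after reversing the orientation of $\Rq$, which interchanges $\Lambda^+$ and $\Lambda^-$, hence $\Gamma_1$ and $\Gamma_2$, while turning $\omega_{\mathrm{std}}$ into an anti-self-dual form; equivalently one applies an orientation-reversing congruence.

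The routine part is the bookkeeping of steps one and two, and the main obstacle is the group-theoretic point in the last step: one must verify that the two induced $SO(3)$ actions are genuinely independent, so that a great circle in the $\Gamma_1$-sphere can be normalised without disturbing the requirement---there is none---on the $\Gamma_2$-sphere. This independence is precisely the splitting $\mathrm{Spin}(4)\cong SU(2)\times SU(2)$, and it is what makes ``contained in a great circle'' a congruence-invariant condition equivalent to being an $L$-surface.
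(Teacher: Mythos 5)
Your proposal is correct, and structurally it runs parallel to the paper's proof: both reduce the Lagrangean condition to a single linear constraint cutting a great circle out of one spherical factor of $G_{2,4}$, and both then use the action of ${\rm SO}(4)$ induced on the Grassmannian to move an arbitrary great circle onto the distinguished one. Your splitting $\Lambda^2\Rq=\Lambda^+\oplus\Lambda^-$ is exactly the paper's Klein coordinates $(a,b)$ in invariant dress, and your pairing $\langle w_0,\Gamma_1\rangle$ is the paper's computation $\Xi^*\omega\equiv 0\Llra p_{13}+p_{24}\equiv 0\Llra b_2\equiv 0$ (the paper's form $\de x\wedge \de u+\de y\wedge\de v$ is anti-self-dual in its coordinate ordering, so its condition lands on $\Gamma_2$ rather than $\Gamma_1$; this is only a labelling difference, and the theorem allows either). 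Where you genuinely diverge is the normalisation step: the paper proves from scratch that $A\mapsto \mathfrak A$ is a well-defined orthogonal action on $\mathbf R^6$ preserving $\mathbf S^5$ and $G_{2,4}$, and then exhibits an explicit matrix $\hat A\in {\rm SO}(4)$ with $\hat{\mathfrak A}(0,1,0,0,-1,0)=(\alpha,\alpha)$, taking $A=\hat A^{-1}$ as the congruence; you instead invoke the surjectivity of ${\rm SO}(4)\lra {\rm SO}(3)\times {\rm SO}(3)$ coming from $\mathrm{Spin}(4)\cong SU(2)\times SU(2)$, together with transitivity of ${\rm SO}(3)$ on great circles. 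Your route is shorter and more conceptual---in particular it makes transparent why the two sphere factors can be rotated independently, and why orientation-reversing congruences swap $\Gamma_1$ and $\Gamma_2$, a point the paper dispatches with a parenthetical remark---but it leans on a classical group-theoretic fact cited rather than proved; the paper's route is longer but self-contained and constructive, producing the actual rigid motion that renders the surface Lagrangean. Both arguments are complete.
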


\newpage
\section{Basic definitions and results}

We consider a surface $S\subset \mathbf R^4$ locally given by a parametrization:
\[
\Xi : U\subset\mathbf R^2\longrightarrow \Rq
\]
and  a set $\{e_1,e_2,e_3,e_4\}$  of orthonormal vectors, depending on $(x,y)\in U$,  satisfying:
\begin{itemize}
\item $e_1(x,y)$ and $e_2(x,y)$ span the tangent space $T_{\Xi(x,y)}S$ of $S$ at $\Xi(x,y)$.
\item $e_3(x,y)$ and $e_4(x,y)$ span the normal space $N_{\Xi(x,y)}S$ of $S$ at $\Xi(x,y)$.
\end{itemize}
Then $\Xi,\{e_1,e_2,e_3,e_4\}$ is an \emph{adapted moving frame} for $S$. 

While the image of $D\Xi$ is the tangent space of $S$, the image of the second derivative $D^2\Xi$ has both tangent and normal components; the vector valued quadratic form associated to the normal component:
\beq
(D^2\Xi\cdot e_3) e_3+(D^2\Xi\cdot e_4) e_4
\eeq
is the \emph{second fundamental form} $\sff$ of $S$. It can be written \cite{little} as $\sff_1e_3+\sff_2 e_4$, with:
\begin{subequations}\label{sff}
\begin{align}
\sff_1=&au_1^2+2bu_1u_2+cu_2^2\label{sff1}\\
\sff_2=&eu_1^2+2fu_1u_2+gu_2^2\label{sff2}
\end{align}
\end{subequations}
where $u=u_1 e_1+u_2 e_2$ is a tangent vector.

We can express the Gaussian, normal and mean curvature in terms of the coefficients of the second fundamental form \cite{little}:
\begin{align}\label{curvatures}
K=&(ac-b^2)+(eg-f^2)\\
\kappa=&(a-c)f-(e-g)b\notag\\
\mathcal H=&\dfrac{1}{2}(a+c)e_3+\dfrac{1}{2}(e+g)e_4\notag
\end{align}

Assume the surface $S$ locally given by a  parametrization:
\[
\Xi : (x,y)\mapsto (x,y,\varphi(x,y),\psi(x,y))
\]
where $\Phi=(\varphi, \psi)$ has vanishing first jet at the origin,  $j^1\Phi(0)=0$.

The vectors $T_1$ and $T_2$ span the tangent space of $S$:
\[
T_1=\Xi_x=(1,0,\varphi_x,\psi_x),\quad T_2=\Xi_y=(0,1,\varphi_y,\psi_y)
\]
the index $z$ standing for derivative with respect to $z$.

The induced metric in $S$ is given by the first fundamental form:
\[
\fff=E\de x^2+2F\de x\de y+G\de y^2
\]
where:
\[
E=T_1\cdot T_1,\quad F=T_1\cdot T_2,\quad G=T_2\cdot T_2,
\]
We define:
\[
W=EG-F^2
\]

Instead of an orthonormal frame, it is more convenient to take a basis:
\begin{align}
T_1=&(1,0,\varphi_x,\psi_x),& & T_2=(0,1,\varphi_y,\psi_y)\\
\notag
N_1=&(-\varphi_x,-\varphi_y,1,0), && N_2=(-\psi_x,-\psi_y,0,1)
\end{align}
The vectors $T_1$ and $T_2$ span the tangent space, and the vectors $N_1$ and $N_2$ span the normal space. We define:
\[
\hat E=N_1\cdot N_1,\quad \hat F=N_1\cdot N_2,\quad \hat G=N_2\cdot N_2,
\]
and it is easy to verify that:
\[
\hat E  \hat G-\hat F^2=W
\]

\begin{prop}
The Gaussian curvature is  given by:
\beq
K=\dfrac{1}{W^2}(\hat E H_\psi-\hat FQ+\hat G H_\varphi)\label{K}
\eeq
where:
\[
H_f=\He(f)=\left|
\begin{array}{cc}
f_{xx} &f_{xy}\\
f_{xy} & f_{yy}
\end{array}
\right|, \quad
Q=\left|
\begin{array}{cc}
\varphi_{xx} & \varphi_{xy}\\
\psi_{xy} & \psi_{yy}
\end{array}
\right|-\left|
\begin{array}{cc}
\varphi_{xy} & \varphi_{yy}\\
\psi_{xx} & \psi_{xy}
\end{array}\right|
\]
\end{prop}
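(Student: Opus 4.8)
The plan is to read $K$ off the Gauss equation. For a surface in a flat ambient space and any tangent basis $X,Y$ it states
\[
K\bigl(|X|^2|Y|^2-\langle X,Y\rangle^2\bigr)=\langle\sff(X,X),\sff(Y,Y)\rangle-|\sff(X,Y)|^2 ,
\]
where $\sff(X,Y)$ denotes the normal component of the ambient derivative of $Y$ along $X$; choosing $X=T_1$, $Y=T_2$ makes the bracket equal to $W=EG-F^2$. In an orthonormal adapted frame the right-hand side is exactly $(ac-b^2)+(eg-f^2)$, so this is merely the invariant form of \eqref{curvatures}, and I may work with it directly, without orthonormalising the frame.

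First I would compute
\[
\Xi_{xx}=(0,0,\varphi_{xx},\psi_{xx}),\quad \Xi_{xy}=(0,0,\varphi_{xy},\psi_{xy}),\quad \Xi_{yy}=(0,0,\varphi_{yy},\psi_{yy}),
\]
and pair them against $N_1=(-\varphi_x,-\varphi_y,1,0)$ and $N_2=(-\psi_x,-\psi_y,0,1)$. Because the first two slots of each $\Xi_{ij}$ vanish, these pairings are immediate: $\Xi_{ij}\cdot N_1=\varphi_{ij}$ and $\Xi_{ij}\cdot N_2=\psi_{ij}$.

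The step I expect to be the real work is that $\{N_1,N_2\}$ is not orthonormal, so the normal projection $\sff_{ij}$ of $\Xi_{ij}$ is \emph{not} read off directly from these dot products. Writing $\sff_{ij}=\alpha N_1+\beta N_2$ one must solve $A\,(\alpha,\beta)^\top=(\varphi_{ij},\psi_{ij})^\top$ against the normal Gram matrix $A=\left(\begin{smallmatrix}\hat E&\hat F\\ \hat F&\hat G\end{smallmatrix}\right)$, whose determinant is $W$. The saving observation is that the inner product on the normal plane is itself given by $A$, so the two inverses telescope:
\[
\langle\sff_{ij},\sff_{kl}\rangle=(\varphi_{ij},\psi_{ij})\,A^{-1}(\varphi_{kl},\psi_{kl})^\top,\qquad A^{-1}=\tfrac1W\left(\begin{smallmatrix}\hat G&-\hat F\\ -\hat F&\hat E\end{smallmatrix}\right).
\]

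Substituting $(ij,kl)=(xx,yy)$ and $(xy,xy)$ into the Gauss equation and dividing once more by $W$ then yields
\[
K=\frac1{W^2}\Bigl(\hat E(\psi_{xx}\psi_{yy}-\psi_{xy}^2)-\hat F(\varphi_{xx}\psi_{yy}+\psi_{xx}\varphi_{yy}-2\varphi_{xy}\psi_{xy})+\hat G(\varphi_{xx}\varphi_{yy}-\varphi_{xy}^2)\Bigr).
\]
The final bookkeeping is to recognise the first and third groups as $H_\psi$ and $H_\varphi$, and to expand the two $2\times 2$ determinants defining $Q$, which give exactly $\varphi_{xx}\psi_{yy}-2\varphi_{xy}\psi_{xy}+\varphi_{yy}\psi_{xx}$; this matches the middle group and completes the identification.
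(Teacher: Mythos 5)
The paper states this proposition without proof, so there is nothing to compare against line by line; judged on its own, your argument is correct and complete. The Gauss equation in the form you quote is the right invariant starting point, and your consistency check against \eqref{curvatures} in an orthonormal frame is accurate. The one step that genuinely needs care --- that $\{N_1,N_2\}$ is not orthonormal, so the normal components $\varphi_{ij},\psi_{ij}$ of $\Xi_{ij}$ must be run through the inverse Gram matrix --- is handled correctly: since the normal inner product is given by $A$ itself, the product $A^{-1}AA^{-1}$ collapses to $A^{-1}$, and $\det A=\hat E\hat G-\hat F^2=W$ (an identity the paper records) supplies exactly the extra factor of $W$ that turns $KW=\frac{1}{W}(\cdots)$ into the claimed $\frac{1}{W^2}$ prefactor. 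Your expansion of $Q$ as $\varphi_{xx}\psi_{yy}-2\varphi_{xy}\psi_{xy}+\varphi_{yy}\psi_{xx}$ matches the cross term from $-\hat F\bigl(\varphi_{xx}\psi_{yy}+\psi_{xx}\varphi_{yy}-2\varphi_{xy}\psi_{xy}\bigr)$, so the identification is exact and the proof stands as a legitimate filling of the gap the paper leaves open.
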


\begin{prop}
The  normal curvature is given by:
\beq
\kappa=\dfrac{1}{W^2}(EL-FM+GN)\label{N}\
\eeq
where:
\[
L=\left|
\begin{array}{cc}
\varphi_{xy} & \varphi_{yy}\\
\psi_{xy} & \psi_{yy}
\end{array}
\right|, \quad
M=\left|
\begin{array}{cc}
\varphi_{xx} & \varphi_{yy}\\
\psi_{xx} & \psi_{yy}
\end{array}
\right|, \quad
N=\left|
\begin{array}{cc}
\varphi_{xx} & \varphi_{xy}\\
\psi_{xx} & \psi_{xy}
\end{array}
\right|
\]
\end{prop}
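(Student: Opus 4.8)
The plan is to start from the frame-independent definition $\kappa=(a-c)f-(e-g)b$ in \eqref{curvatures} and transport it from an orthonormal adapted frame to the coordinate frame $\{T_1,T_2,N_1,N_2\}$. First I would observe that, in an orthonormal adapted frame, the shape operators $A_{e_3},A_{e_4}$ have as their matrices precisely the coefficient matrices $\left(\begin{smallmatrix}a&b\\ b&c\end{smallmatrix}\right)$ and $\left(\begin{smallmatrix}e&f\\ f&g\end{smallmatrix}\right)$ of $\sff_1,\sff_2$ in \eqref{sff}. A one-line computation shows that the single independent (off-diagonal) entry of the commutator $[A_{e_3},A_{e_4}]$ is exactly $\pm\big((a-c)f-(e-g)b\big)$; equivalently, $\kappa$ is the value obtained by pairing the skew operator $[A_{e_3},A_{e_4}]$ with the oriented unit tangent bivector $e_1\wedge e_2$.

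Next I would make the frame change functorial. Consider the scalar $\Theta(\xi,\eta;X,Y)=\langle[A_\xi,A_\eta]X,Y\rangle$, with $\xi,\eta$ normal and $X,Y$ tangent. Since $A_\xi$ is linear in $\xi$ and self-adjoint, $\Theta$ is linear in all four arguments, alternating in $(\xi,\eta)$ because of the commutator, and alternating in $(X,Y)$ because a commutator of self-adjoint operators is skew-adjoint. An alternating bilinear form on a plane is a multiple of the determinant, so replacing $(e_1,e_2)$ by $(T_1,T_2)$ and $(e_3,e_4)$ by $(N_1,N_2)$ only multiplies $\Theta$ by the two signed areas $\md{T_1\wedge T_2}=\sqrt{EG-F^2}$ and $\md{N_1\wedge N_2}=\sqrt{\hat E\hat G-\hat F^2}$. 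Using the identity $\hat E\hat G-\hat F^2=W=EG-F^2$ recorded in the excerpt, this yields
\[
\kappa=\pm\,\frac{1}{W}\,\langle[A_{N_1},A_{N_2}]T_1,T_2\rangle ,
\]
the sign being fixed once the orientations of the tangent and normal planes are chosen.

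The third step is the explicit evaluation in coordinates. Because $\Xi_{xx},\Xi_{xy},\Xi_{yy}$ have vanishing first two components, one reads off $\Xi_{ij}\cdot N_1=\He(\varphi)_{ij}$ and $\Xi_{ij}\cdot N_2=\He(\psi)_{ij}$; hence the two scalar second fundamental forms relative to $N_1,N_2$ are the Hessians $\Phi_1=\He(\varphi)$ and $\Phi_2=\He(\psi)$, and the corresponding shape operators are $A_{N_a}=\mathcal G^{-1}\Phi_a$ with $\mathcal G=\left(\begin{smallmatrix}E&F\\ F&G\end{smallmatrix}\right)$. A short manipulation then identifies $\langle[A_{N_1},A_{N_2}]T_1,T_2\rangle$ with the $(1,2)$ entry of $\Phi_2\mathcal G^{-1}\Phi_1-\Phi_1\mathcal G^{-1}\Phi_2$.

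It remains to expand this entry. Writing $\mathcal G^{-1}=\tfrac1W\left(\begin{smallmatrix}G&-F\\ -F&E\end{smallmatrix}\right)$ and using linearity in $\mathcal G^{-1}$, the coefficients of $G$, $-F$ and $E$ in the $(1,2)$ entry collapse to the three $2\times2$ minors $N$, $M$ and $L$ of $\left(\begin{smallmatrix}\varphi_{xx}&\varphi_{xy}&\varphi_{yy}\\ \psi_{xx}&\psi_{xy}&\psi_{yy}\end{smallmatrix}\right)$, producing $\tfrac1W(EL-FM+GN)$ up to an overall sign. Combining with the factor from the second step gives $\kappa=W^{-2}(EL-FM+GN)$. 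The only genuine difficulty is sign bookkeeping: one must track the two $\sqrt W$ area factors together with the orientation-dependent signs, and confirm — for instance by testing a point where $j^1\Phi=0$, so that the two frames coincide and $\kappa=L+N$ — that the conventions of the statement make all signs positive; the minor identities themselves are routine.
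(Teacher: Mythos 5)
The paper never proves this proposition: it is stated (together with the companion formula for $K$) as a quoted fact from Little's framework \cite{little}, with no argument supplied, so there is no internal proof to compare yours against. Judged on its own, your proof is correct, and it is a genuine derivation rather than a citation. Your step 1 is right: for the matrices $\bigl(\begin{smallmatrix}a&b\\ b&c\end{smallmatrix}\bigr)$, $\bigl(\begin{smallmatrix}e&f\\ f&g\end{smallmatrix}\bigr)$ the commutator is $\bigl(\begin{smallmatrix}0&\mu\\ -\mu&0\end{smallmatrix}\bigr)$ with $\mu=(a-c)f-(e-g)b$, which is exactly $\kappa$ from \eqref{curvatures}; this is the Ricci equation for the normal bundle in disguise. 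The multilinearity and alternation of $\Theta(\xi,\eta;X,Y)=\langle[A_\xi,A_\eta]X,Y\rangle$ do reduce the frame change to the two determinant factors $\sqrt{W}\cdot\sqrt{W}=W$, using the paper's identity $\hat E\hat G-\hat F^2=W$, and your coordinate step checks out: since $\Xi_{xx},\Xi_{xy},\Xi_{yy}$ have vanishing first two components, the bilinear forms $\langle\sff(\cdot,\cdot),N_i\rangle$ have matrices $\Phi_1=\He(\varphi)$, $\Phi_2=\He(\psi)$ in the basis $(T_1,T_2)$, the shape operators are $\mathcal G^{-1}\Phi_i$, and expanding with $\mathcal G^{-1}=W^{-1}\bigl(\begin{smallmatrix}G&-F\\ -F&E\end{smallmatrix}\bigr)$ gives for the relevant entry
\begin{equation*}
\bigl(\Phi_2\mathcal G^{-1}\Phi_1-\Phi_1\mathcal G^{-1}\Phi_2\bigr)_{12}
=-\tfrac{1}{W}\bigl(EL-FM+GN\bigr),
\end{equation*}
whose minus sign cancels against the one in $\langle[A_{e_3},A_{e_4}]e_1,e_2\rangle=-\kappa$, yielding \eqref{N} exactly. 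Your device for fixing the sign is legitimate: once the orientations of the tangent and normal planes are chosen consistently with $(T_1,T_2)$ and $(N_1,N_2)$, the relative sign is a universal constant, so evaluating at a point with $j^1\Phi=0$ (where the two frames coincide, $W=1$, and both sides reduce to $L+N$) pins it to $+1$. What your route buys, compared with the paper's silence, is a frame-independent explanation of where the combination $EL-FM+GN$ comes from, and the same argument proves the Gaussian-curvature formula \eqref{K} verbatim by replacing the commutator pairing with the Gauss-equation pairing.
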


\newpage
\section{The Gauss map}

For an immersed  oriented two dimensional surface $S$ in $\Rq$, the \emph{Gauss map} takes a point $p$ of the surface $S$ to the oriented 
plane through the origin $[T_pS]$ which is parallel to the tangent plane $T_pS$ to $S$ at $p$. 
The image of this map is a subset of the Grassmannian manifold $G_{2, 4}$, the  set of oriented two planes in $\Rq$. 

In $\Rq$ with oriented basis given by $e_i$, 
$i=1,\ldots,4$, we can completely determine an oriented plane $P$, defined by the ordered pair of perpendicular unit vectors $v_1=c_1 e_1 + c_2 e_2 + c_3 e_3 + c_4 e_4$ 
and $v_2=d_1 e_1 + d_2 e_2 + d_3 e_3 + d_4 e_4$, by giving 
the algebraic areas $p_{ij}=c_id_j-c_jd_i$ of the projections of the unit square associated to $v_1,\ v_2$ in that plane 
into the six coordinate planes $e_i\wedge e_j$, $i<j$:
\[
P=p_{12}e_1\wedge e_2+p_{13}e_1\wedge e_3+p_{14}e_1\wedge e_4+p_{34}e_3\wedge e_4+p_{42}e_4\wedge e_2+p_{23}e_2\wedge e_3
\]
The six Pl\"ucker coordinates are $P=(p_{12},p_{13},p_{14},p_{34},p_{42},p_{23})$, and could be obtained by taking the wedge product of 
any oriented pair of linearly independent vectors $v_1$ and $v_2$:
\[
P=\dfrac{v_1\wedge v_2}{|v_1\wedge v_2|}
\]

The set of two planes in four space is not six dimensional since 
there are two relations among these Pl\"ucker coordinates, as can be shown by direct calculation:
\begin{itemize}
\item $\sum_{i<j}p_{ij}^2=1$.
\item $p_{12}p_{34}+p_{13}p_{42}+p_{14}p_{23}=0$.
\end{itemize}
The first relation defines $\mathbf S^5\subset \mathbf R^6$, and the two together the Klein quadric.

We define the Klein coordinates by:
\begin{align}
a_1& = p_{12}+p_{34}, &a_2& =p_{13}+p_{42}, &a_3 &= p_{14}+p_{23}\\
\notag
b_1& = p_{12}-p_{34}, &b_2 &=p_{13}-p_{42}, &b_3 &= p_{14}-p_{23}
\end{align}

The two algebraic conditions on the Pl\"ucker coordinates then become:
\[
a_1^2+a_2^2+a_3^2=1,\quad b_1^2+b_2^2+b_3^2=1
\]

Thus the vectors $a = (a_1, a_2, a_3)$ and $b = (b_1 b_2, b_3)$ are unit vectors in three space, 
the sphere coordinates for a plane in $G_{2, 4}$, and we can view 
$G_{2, 4}$ as the product of two 2-spheres:
\[
G_{2, 4}\cong \mathbf S^2\times \mathbf S^2
\]
The Gauss map $p\mapsto [T_p S]$ will be denoted by:
\[
\Gamma : S\longrightarrow G_{2, 4}, \quad \Gamma=(\Gamma_1,\Gamma_2), \quad \Gamma_{1,2}:
S\longrightarrow \mathbf S^2
\]
We shall use $\Gamma$  with the Pl\"ucker coordinates, and $(\Gamma_1,\Gamma_2)$ with the Klein coordinates.

It is possible to define a symplectic form (or an area form) $\sigma$ on $\mathbf S^2\subset \mathbf R^3$ as follows:
\beq\label{sigma}
\sigma=i^* \zeta , \quad \zeta =x\de y\wedge\de z+y\de z\wedge\de x+z\de x\wedge\de y
\eeq
where $i: \mathbf S^2\lra\mathbf R^3$ is the standard  inclusion.

If we consider a surface $S$ locally given by a parametrization:
\[
\Xi : (x,y)\mapsto (x,y,\varphi(x,y),\psi(x,y))
\]
where the 1-jets of $\varphi$ and $\psi$ vanish at the origin, then:
\[
T_1\wedge T_2=(1,\varphi_y,\psi_y,\varphi_x\psi_y- \varphi_y\psi_x,\psi_x,-\varphi_x), \quad |T_1\wedge T_2|=\sqrt{W}
\]
The Gauss map is given by:
\begin{align}
\Gamma_1=\dfrac{1}{\sqrt{W}}(1+\varphi_x\psi_y- \varphi_y\psi_x,-\varphi_x+\psi_y , -\varphi_y-\psi_x) \\
\notag
 \Gamma_2=\dfrac{1}{\sqrt{W}} (1-\varphi_x\psi_y+\varphi_y\psi_x,-\varphi_x-\psi_y , -\varphi_y+\psi_x)
\end{align}
and the normal space is represented by $(\Gamma_1,-\Gamma_2)$.

There is a relation between the Gauss map and the Gaussian and normal curvature of the surface, as expressed by the following:

\begin{theorem}[Blaschke \cite{bla}]\label{primus}
The pull-back of the form $\sigma$ on $ \mathbf S^2$ by a component of the Gauss map is:
\beq\label{curvaturas}
\Gamma_1^* \sigma =(K+\kappa)\,\omega_1\wedge \omega_2, \quad
\Gamma_2^* \sigma =(K-\kappa)\,\omega_1\wedge \omega_2
\eeq
\end{theorem}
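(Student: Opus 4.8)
The plan is to reduce the statement to a pointwise identity and to exploit the explicit formula for pulling back $\zeta$ by a sphere-valued map. First I would record the elementary fact that for any map $F=(F^1,F^2,F^3)$ from $U$ into $\mathbf S^2$ one has
\beq
F^*\zeta=\det\bigl(F,\,F_x,\,F_y\bigr)\,\de x\wedge\de y,
\eeq
the determinant being the triple product $F\cdot(F_x\times F_y)$; this follows at once by substituting $\de F^i$ into the three summands of $\zeta$. Thus $\Gamma_j^*\sigma$ is known as soon as the first derivatives of $\Gamma_j$ are, and the whole theorem becomes the pair of scalar identities $\det(\Gamma_1,(\Gamma_1)_x,(\Gamma_1)_y)=(K+\kappa)\sqrt W$ and $\det(\Gamma_2,(\Gamma_2)_x,(\Gamma_2)_y)=(K-\kappa)\sqrt W$, after noting that $\omega_1\wedge\omega_2=\sqrt W\,\de x\wedge\de y$ is the area form of $S$.

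Both sides of these identities are geometric: $K$ and $\kappa$ are invariants of the immersion, $\sigma$ is the rotation-invariant area form on $\mathbf S^2$, and under a congruence of $\Rq$ the two factors of $G_{2,4}\cong\mathbf S^2\times\mathbf S^2$ are rotated by elements of $SO(3)$, which preserve $\sigma$. Hence it suffices to verify each identity at a single point, and I may choose coordinates so that the point is the origin with $j^1\Phi(0)=0$. At such a point $E=G=1$ and $F=0$, so $W=1$; moreover every first derivative of $E,F,G$ carries a factor of a first derivative of $\varphi$ or $\psi$, so $W_x=W_y=0$ there. Consequently the normalising factor $1/\sqrt W$ contributes nothing to $(\Gamma_j)_x$ and $(\Gamma_j)_y$ at the origin, and these derivatives are obtained simply by differentiating the numerator vectors.

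Carrying this out, at the origin $\Gamma_1=(1,0,0)$, so the first row of the determinant removes the first column and leaves a $2\times2$ determinant in the second-order derivatives of $\varphi$ and $\psi$. Expanding it and regrouping, I expect the ``diagonal'' terms to assemble into $\He(\varphi)+\He(\psi)$ and the mixed terms into $L+N$. By the two Propositions, at the origin (where $\hat E=\hat G=1$, $\hat F=0$, $E=G=1$, $F=0$, $W=1$) these reduce precisely to $K=\He(\varphi)+\He(\psi)$ and $\kappa=L+N$, giving $\det(\Gamma_1,\dots)=K+\kappa$. The computation for $\Gamma_2$ is identical except for the sign changes built into its components, which flip the sign of the mixed terms and produce $K-\kappa$.

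The main obstacle is not the algebra but this reduction step: one must be sure that both members of each identity really are invariant under the change of adapted frame and Euclidean congruence used to arrange $j^1\Phi(0)=0$, so that checking them at the origin is legitimate. Once that is secured, the remaining work is the bookkeeping in the determinant expansion, the careful matching of its terms against the curvature formulas of the two Propositions, and the sign analysis distinguishing $\Gamma_1$ from $\Gamma_2$.
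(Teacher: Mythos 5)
The paper itself gives no proof of this theorem: it is quoted from Blaschke \cite{bla}, with only the remark that it is equivalent to $\det {\rm D}\Gamma_1=K+\kappa$, $\det {\rm D}\Gamma_2=K-\kappa$. So there is no argument of the paper's to compare yours against; what you have written is a self-contained verification, and it is correct. Your pullback formula $F^*\zeta=F\cdot(F_x\times F_y)\,\de x\wedge \de y$ is right; the reduction to a single point is legitimate, because a congruence $A\in SO(4)$ replaces $\Gamma_{1,2}$ by $R_{1,2}\circ\Gamma_{1,2}$ with $R_{1,2}\in SO(3)$ (the induced orthogonal action on $\mathbf R^6$ preserves the splitting into the two sphere factors given by the Klein coordinates --- the classical $SO(4)\to SO(3)\times SO(3)$ fact, whose orthogonality part is exactly the content of the lemmas the paper proves later for its main theorem), while $K$, $\kappa$ and $\omega_1\wedge\omega_2$ are unchanged under orientation-preserving congruences; and the algebra you left as ``expected'' does close. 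Indeed, at the origin in Monge form one gets
\begin{align*}
(\Gamma_1)_x&=(0,\,-\varphi_{xx}+\psi_{xy},\,-\varphi_{xy}-\psi_{xx}), &
(\Gamma_1)_y&=(0,\,-\varphi_{xy}+\psi_{yy},\,-\varphi_{yy}-\psi_{xy}),
\end{align*}
so that $\det(\Gamma_1,(\Gamma_1)_x,(\Gamma_1)_y)=\He(\varphi)+\He(\psi)+L+N$, which by the two Propositions (with $E=G=\hat E=\hat G=1$, $F=\hat F=0$, $W=1$) equals $K+\kappa$; the sign changes in $\Gamma_2$ flip exactly the $L+N$ block and give $K-\kappa$. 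The one step worth writing out in a final version is the $SO(3)\times SO(3)$ equivariance claim, since the whole localization argument rests on it; it is classical, but as stated it is an assertion, not yet a proof.
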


\begin{rem}
The above statement is equivalent to:
\[
\det {\rm D}\Gamma_1=K+\kappa,\quad \det {\rm D}\Gamma_2=K-\kappa 
\]
\end{rem}

The Grassmannian $G_{2, 4}$ can also be considered a symplectic manifold in two natural ways, with symplectic forms $\Omega_{\pm}$ defined as follows:
\[
\Omega_{+}=\dfrac{1}{2}\left(\pi_{a}^*\sigma_{a}+\pi_{b}^*\sigma_{b}\right), \quad
\Omega_{-}=\dfrac{1}{2}\left(\pi_{a}^*\sigma_{a}-\pi_{b}^*\sigma_{b}\right)
\]

\begin{cor}
The pull-back of the symplectic forms $\Omega_{\pm}$ on $G_{2, 4}$ by  the Gauss map is:
\beq\label{curvaturas'}
\Gamma^* \Omega_{+} =K\,\omega_1\wedge \omega_2, \quad
\Gamma^* \Omega_{-}=\kappa\,\omega_1\wedge \omega_2
\eeq
\end{cor}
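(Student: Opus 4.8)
The plan is to derive the Corollary directly from Blaschke's Theorem~\ref{primus} by pulling back the defining expressions for $\Omega_\pm$ through the Gauss map $\Gamma=(\Gamma_1,\Gamma_2)$ and using naturality of pull-backs. This is essentially a bookkeeping argument: the hard content is already contained in the theorem, and the corollary is the formal consequence of combining the two component identities.

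First I would unwind the definitions. Since $\Omega_{\pm}=\tfrac{1}{2}\bigl(\pi_a^*\sigma_a\pm\pi_b^*\sigma_b\bigr)$, where $\pi_a,\pi_b$ are the two projections of $G_{2,4}\cong\mathbf S^2\times\mathbf S^2$ onto its factors and $\sigma_a,\sigma_b$ are the area forms~\eqref{sigma} on each sphere, I would use functoriality of the pull-back: $\Gamma^*\pi_a^* = (\pi_a\circ\Gamma)^*$. The key observation is that $\pi_a\circ\Gamma=\Gamma_1$ and $\pi_b\circ\Gamma=\Gamma_2$ by the very definition of the components of the Gauss map, so that $\Gamma^*\pi_a^*\sigma_a=\Gamma_1^*\sigma$ and $\Gamma^*\pi_b^*\sigma_b=\Gamma_2^*\sigma$, where $\sigma$ is the single area form on $\mathbf S^2$.

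Then I would simply substitute the two identities from Theorem~\ref{primus}, namely $\Gamma_1^*\sigma=(K+\kappa)\,\omega_1\wedge\omega_2$ and $\Gamma_2^*\sigma=(K-\kappa)\,\omega_1\wedge\omega_2$, into the linear combination. This gives
\[
\Gamma^*\Omega_{+}=\tfrac{1}{2}\bigl((K+\kappa)+(K-\kappa)\bigr)\,\omega_1\wedge\omega_2=K\,\omega_1\wedge\omega_2,
\]
and likewise the minus sign in $\Omega_-$ produces $\tfrac{1}{2}\bigl((K+\kappa)-(K-\kappa)\bigr)=\kappa$, yielding the second formula in~\eqref{curvaturas'}. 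The arithmetic is immediate once the pull-backs are correctly identified.

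The only point requiring care — and hence the main (modest) obstacle — is justifying the identification $\pi_a\circ\Gamma=\Gamma_1$ at the level of forms rather than heuristically: one must confirm that the area form $\sigma_a$ on the first $\mathbf S^2$ factor agrees, under the isomorphism $G_{2,4}\cong\mathbf S^2\times\mathbf S^2$ given by the Klein/sphere coordinates, with the form $\sigma$ of~\eqref{sigma} used in Theorem~\ref{primus}, and similarly for $\sigma_b$. Once the conventions for orientations and normalisations are checked to be consistent between the statement of Blaschke's theorem and the definition of $\Omega_\pm$, the corollary follows with no further computation.
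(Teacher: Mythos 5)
Your proposal is correct and is precisely the argument the paper intends: the corollary is stated without proof because it follows from Theorem~\ref{primus} exactly as you describe, by functoriality of pull-backs ($\Gamma^*\pi_a^*\sigma = \Gamma_1^*\sigma$, $\Gamma^*\pi_b^*\sigma = \Gamma_2^*\sigma$) and the linear combination $\tfrac{1}{2}\bigl((K+\kappa)\pm(K-\kappa)\bigr)$, which gives $K$ and $\kappa$ respectively. Your closing remark about checking that $\sigma_a,\sigma_b$ are the same area form $\sigma$ of~\eqref{sigma} used in Blaschke's theorem is the right point of care, and it holds here by the paper's conventions.
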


\newpage

\section{Isoclinic surfaces}

Wong~ \cite{w} developed a curvature theory for surfaces 
in $\Rq$ based on the two angles between two tangent planes of the surface. We are interested in conditions under which those two angles be equal:

\begin{defn}
Two planes in $\Rq$, passing through the origin, are isoclinic if the the unit circumference centered at the origin in one of them projects as a circumference in the other one.
\end{defn}

\begin{defn}
The isocline surface $\mathcal I_P$ of a plane $P$ in $\Rq$ is the subset of $G_{2,4}$ consisting of the planes $P'$ that are isoclinic to $P$.
\end{defn}

We take  the $(x,y)$-plane, defined by $u=0$, $v=0$. A plane $P$ defined by:
\[
u=\alpha_1x+\beta_1y,\quad v=\alpha_2x+\beta_2y
\]
is isoclinic to the $(x,y)$-plane if the equation:
\[
x^2+y^2+(\alpha_1x+\beta_1y)^2+(\alpha_2x+\beta_2y)^2=1
\]
defines a circumference in the $(x,y)$-plane. This is equivalent to:
\begin{equation}\label{isosup}
\|\alpha\|^2=\|\beta\|^2, \quad \alpha\cdot \beta=0
\end{equation}
where $\alpha=(\alpha_1,\alpha_2)$ and $\beta=(\beta_1,\beta_2)$. The equations \eqref{isosup} for $(\alpha,\beta)$ define a singular surface $\mathcal I$ in $\Rq$; the unique singularity    is at the origin $(0,0,0,0)$, corresponding to the  $(x,y)$-plane.

We can view $\mathcal I$ as the union $\mathcal I=\mathcal I^+\cup  \mathcal I^-$ of two orthogonal planes in $\Rq$:
\[
\mathcal I^{\pm}: \qquad \beta_1= \mp\alpha_2,\quad  \beta_2=\pm \alpha_1
\]
Let $C$ be the linear map exchanging the third and fourth coordinates:
\[
C=\left[\begin{matrix}
1&0&0&0\\
0&1&0&0\\
0&0&0&1\\
0&0&1&0\\
\end{matrix}\right]
\]
Then:
\[
\mathcal I^{+}=C\mathcal I^{-}
\]

The above is a parametrization of a part of the isocline surface. In fact, the isocline surface $\mathcal I$ of the $(x,y)$ plane is a compact subset of $G_{2,4}$, the union of the two sets:
\[
\mathcal I^+=\{P\in G_{2,4} \mid a(P)=e_1\in S^2\},\quad \mathcal I^-=\{P\in G_{2,4} \mid b(P)=e_1\in S^2\}
\]
where $(a(P), b(P))$ denotes the  sphere coordinates on $G_{2,4}$.

\begin{prop}
The isocline surface $\mathcal I_P$ of the plane $P$ is the union of the orbit of $P$ under the action of $SU(2)$ in $\Rq$ and of its image under the linear map $C$ :
\[
\mathcal I_P=\{AP \mid A\in SU(2)\}\cup \{CAP \mid A\in SU(2)\}
\]
\end{prop}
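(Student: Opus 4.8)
The plan is to prove the identity first for the coordinate plane $P_0$, the $(x,y)$-plane, whose sphere coordinates are $a=b=e_1$, and then to propagate it to an arbitrary plane by homogeneity. I will freely use the three facts already recorded for $P_0$ in this section: the decomposition $\mathcal I_{P_0}=\mathcal I^+\cup\mathcal I^-$, the descriptions $\mathcal I^+=\{P:a(P)=e_1\}$ and $\mathcal I^-=\{P:b(P)=e_1\}$, and the relation $\mathcal I^+=C\mathcal I^-$.

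First I would identify $\Rq$ with $\Cd$ through the complex structure $Je_1=e_2$, $Je_3=e_4$, so that $SU(2)$ is the group of unit-determinant unitary maps. The point is that the whole first sphere factor $a$ is constant along any $SU(2)$-orbit: up to constant factors the Klein coordinates $a_1,a_2,a_3$ are the values taken on a plane by the K\"ahler form $\omega_J$ and by the real and imaginary parts of the holomorphic area form $\de z_1\wedge\de z_2$, and every $A\in SU(2)$ preserves each of these three forms. Conversely the orbit is the entire fibre over $a=e_1$: the diagonal circle $\mathrm{diag}(e^{i\theta},e^{-i\theta})$ and the subgroup of real rotations rotate $b$ about two independent axes, so the homomorphism $SU(2)\lra SO(3)$ onto the rotations of the $b$-sphere is surjective and the orbit meets every point with $a=e_1$. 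Hence $SU(2)\cdot P_0=\mathcal I^+$.

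Applying $C$ and using $C^2=\mathrm{id}$ together with $\mathcal I^+=C\mathcal I^-$, I obtain
\[
C\cdot SU(2)\cdot P_0=C\,\mathcal I^+=\mathcal I^-,
\]
so that the two orbits exhaust the two fibres and, by the stated decomposition,
\[
\mathcal I_{P_0}=\mathcal I^+\cup\mathcal I^-=SU(2)\cdot P_0\ \cup\ C\cdot SU(2)\cdot P_0 .
\]

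For a general plane I would write $P=gP_0$ with $g\in SO(4)$ and transport this identity. Isoclinicity is an $SO(4)$-invariant relation, so $\mathcal I_P=g\,\mathcal I_{P_0}$; and $SU(2)$ is normal in $SO(4)$, so $g\,(SU(2)\cdot P_0)=SU(2)\cdot P$, which settles the first fibre $\{a=a(P)\}$ at once. The main obstacle is the reflected fibre: one must recognise $g\,(C\cdot SU(2)\cdot P_0)$ as the $C$-image of $SU(2)\cdot P$. This is exactly where orientation intervenes, because $C$ reverses orientation and hence interchanges the two $SU(2)$-factors of $\mathrm{Spin}(4)$ and the two sphere factors of $G_{2,4}$; concretely I would insert $g\,C=(gCg^{-1})\,g$, observe that $gCg^{-1}$ is again an orientation-reversing involution carrying the $a$-fibre to the $b$-fibre, and use normality of $SU(2)$ once more to write the second piece as a reflection of $SU(2)\cdot P$. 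Checking that this bookkeeping reproduces precisely $\{b=b(P)\}$, in the right order, is the delicate step on which the whole argument rests.
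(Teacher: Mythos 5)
The paper states this proposition with no proof at all, so there is nothing of the author's to compare your argument against; what follows is an assessment of your proposal on its own terms. Your first part, for the coordinate plane $P_0$, is correct and complete: the Klein coordinates $a_1,a_2,a_3$ of a plane are precisely its pairings with the K\"ahler form and with the real and imaginary parts of $\de z_1\wedge \de z_2$, every $A\in SU(2)$ preserves these three $2$-forms, the two circles you exhibit rotate the $b$-sphere about independent axes so that $SU(2)$ surjects onto its rotation group, and combining $SU(2)\cdot P_0=\mathcal I^+$ with $C\mathcal I^+=\mathcal I^-$ gives $\mathcal I_{P_0}=SU(2)\cdot P_0\cup C\cdot SU(2)\cdot P_0$ exactly as you say.

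The gap is the step you yourself flag as delicate bookkeeping: that step is not delicate, it is impossible, because the identity as literally stated fails for a general plane $P$. By normality your transport gives $g\bigl(C\cdot SU(2)\cdot P_0\bigr)=(gCg^{-1})\cdot SU(2)\cdot P$, and this set equals the fibre $\{P''\mid b(P'')=b(P)\}$, which is indeed the correct second half of $\mathcal I_P$; but the set demanded by the statement is $C\cdot SU(2)\cdot P=SU(2)'\cdot (CP)=\{P''\mid b(P'')=b(CP)\}$, where $SU(2)'=C\,SU(2)\,C$ is the other normal factor of $SO(4)$. These two fibres agree only when $b(CP)=b(P)$, which holds for $P_0$ but not in general: the assignment $P\mapsto C\cdot SU(2)\cdot P$, with the fixed map $C$ acting on the left, is not $SO(4)$-equivariant, whereas $P\mapsto \mathcal I_P$ and $P\mapsto SU(2)\cdot P$ are. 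Concretely, take $P=e_1\wedge e_3$; then $CP=e_1\wedge e_4$ lies in $C\cdot SU(2)\cdot P$, yet $CP$ is not isoclinic to $P$, since the unit circle of $CP$ projects onto $P$ as the segment $\theta\mapsto \cos\theta\, e_1$, and in sphere coordinates $a(CP)=b(CP)=(0,0,1)$ while $a(P)=b(P)=(0,1,0)$. So no bookkeeping can recover the formula as written; what your argument actually proves, once completed, is the corrected identity
\[
\mathcal I_P=\{AP \mid A\in SU(2)\}\cup \{CACP \mid A\in SU(2)\},
\]
equivalently $\mathcal I_P=SU(2)\cdot P\cup SU(2)'\cdot P$, which coincides with the stated formula for $P=P_0$ (because $CP_0=P_0$) but not for arbitrary $P$. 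You should either prove this corrected statement --- your $P_0$ case together with the equivariance you already established does exactly that --- or restrict the proposition to planes in the special position $b(CP)=b(P)$.
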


\begin{defn}
Let $u\in T_pS$ with $\|u\|=1$; then $u$ spans an isoclinic direction of type $+$ ($-$) if there exists   a curve $\gamma (t)$ on $S$ such that:
\[
\gamma(0)=p,\quad \dot\gamma(0)=u, \qquad \dot P(0)\in T_P\mathcal I_P^{+} \ (T_P\mathcal I_P^{-})
\]
with $P(t)=T_{\gamma (t)}S$, $P=P(0)=T_pS$.
\end{defn}

\begin{theorem}[Wong \cite{w}]
There exists an isoclinic direction at a given  point $p\in S$ if, and only if:
\begin{equation}
|K(p)|=|\kappa(p)|
\end{equation}
\end{theorem}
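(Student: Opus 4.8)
The plan is to translate the geometric definition of an isoclinic direction into a rank condition on the differential of the Gauss map, and then invoke Blaschke's formula. The starting observation is that, under the identification $G_{2,4}\cong \mathbf S^2\times\mathbf S^2$ by sphere coordinates $(a,b)=(\Gamma_1,\Gamma_2)$, the two pieces of the isocline surface of $P$ are the coordinate slices
\[
\mathcal I_P^{+}=\{P'\mid a(P')=a(P)\}=\{a(P)\}\times\mathbf S^2,\qquad
\mathcal I_P^{-}=\{P'\mid b(P')=b(P)\}=\mathbf S^2\times\{b(P)\},
\]
exactly as recorded above for the $(x,y)$-plane and extended to an arbitrary $P$ through the $SU(2)$-description of $\mathcal I_P$. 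Consequently their tangent spaces at $P$ are the two factors
\[
T_P\mathcal I_P^{+}=\{0\}\oplus T_{b(P)}\mathbf S^2,\qquad T_P\mathcal I_P^{-}=T_{a(P)}\mathbf S^2\oplus\{0\}.
\]

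Next I would unwind the definition of an isoclinic direction. For a unit $u\in T_pS$ and any curve $\gamma$ with $\gamma(0)=p$ and $\dot\gamma(0)=u$, the velocity of the moving plane is $\dot P(0)={\rm D}\Gamma_p(u)=({\rm D}\Gamma_1(u),{\rm D}\Gamma_2(u))$, which depends only on $u$ and not on the chosen curve. Comparing with the tangent spaces above, $u$ spans an isoclinic direction of type $+$ precisely when the $a$-component of $\dot P(0)$ vanishes, i.e. ${\rm D}\Gamma_1(u)=0$, and of type $-$ precisely when ${\rm D}\Gamma_2(u)=0$. Hence an isoclinic direction of type $+$ (resp. $-$) exists at $p$ if and only if the linear map ${\rm D}\Gamma_1$ (resp. ${\rm D}\Gamma_2$), between the two-dimensional spaces $T_pS$ and $T_{\Gamma_i(p)}\mathbf S^2$, has nontrivial kernel, that is $\det {\rm D}\Gamma_1=0$ (resp. $\det {\rm D}\Gamma_2=0$); any kernel vector can be normalised to unit length.

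Finally I would substitute the Blaschke identities $\det {\rm D}\Gamma_1=K+\kappa$ and $\det {\rm D}\Gamma_2=K-\kappa$ from the remark following Theorem~\ref{primus}. Existence of a type $+$ direction then reads $K+\kappa=0$ and of a type $-$ direction $K-\kappa=0$, so an isoclinic direction of either type exists if and only if $K=-\kappa$ or $K=\kappa$, which is exactly $|K(p)|=|\kappa(p)|$.

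The step I expect to need the most care is the first one: verifying that at an \emph{arbitrary} plane $P$ the sets $\mathcal I_P^{\pm}$ are again the coordinate slices through $P$, so that $T_P\mathcal I_P^{\pm}$ are the two factors $\{0\}\oplus T\mathbf S^2$ and $T\mathbf S^2\oplus\{0\}$, rather than only at the standard $(x,y)$-plane. This follows by homogeneity once one checks that the isometries carrying the $(x,y)$-plane to $P$ act on $G_{2,4}\cong\mathbf S^2\times\mathbf S^2$ preserving the product structure, fixing one sphere factor and rotating the other, so that coordinate slices are carried to coordinate slices and the map is a local diffeomorphism near $P$; granting this, the tangent-space identification, and with it the whole equivalence, reduces to the already-computed standard case.
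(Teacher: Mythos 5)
The first thing to note is that the paper itself does not prove this theorem: it is quoted from Wong \cite{w} and used as a black box, so there is no internal proof to compare yours against. Your argument therefore has to stand on its own, and it does; moreover it is pleasantly ``internal'' to the paper, in that it only uses machinery the paper already sets up: the description of the isocline surface as the two coordinate slices in sphere coordinates, the observation that $\dot P(0)={\rm D}\Gamma_p(u)=({\rm D}\Gamma_1(u),{\rm D}\Gamma_2(u))$ depends only on $u=\dot\gamma(0)$ and not on the curve, and the Blaschke identities $\det {\rm D}\Gamma_1=K+\kappa$, $\det {\rm D}\Gamma_2=K-\kappa$. The resulting chain of equivalences (a type $+$ direction exists iff $\ker {\rm D}\Gamma_1\neq\{0\}$ iff $K+\kappa=0$; type $-$ iff $K-\kappa=0$; either type iff $(K+\kappa)(K-\kappa)=0$ iff $|K|=|\kappa|$) is sound, and your sign conventions are consistent with the paper's own examples: $R$-surfaces have $\Gamma_1$ constant, all tangent planes in $\mathcal I^+$, and $K=-\kappa$.

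One statement in your final paragraph does need correction, although the conclusion you draw from it is fine. An isometry $A\in {\rm SO}(4)$ carrying the $(x,y)$-plane to a general plane $P$ does \emph{not} act on $\mathbf S^2\times\mathbf S^2$ by fixing one sphere factor and rotating the other: the induced map is a pair of rotations $(R_+,R_-)$, in general both nontrivial. Indeed, if the first factor were fixed pointwise one would get $a(P)=a\bigl(\hbox{$(x,y)$-plane}\bigr)=e_1$, which fails for general $P$; only the $SU(2)$ subgroups act trivially on one factor. What your argument actually needs is weaker, and true: the action of ${\rm SO}(4)$ on $\Lambda^2\mathbf R^4\cong \mathbf R^6$ preserves the splitting into self-dual and anti-self-dual parts, i.e.\ the induced map $\mathfrak A$ preserves each of the two sphere factors rather than exchanging them (exchange occurs only for orientation-reversing maps, such as the matrix $C$ in the paper). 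Hence coordinate slices are carried to coordinate slices of the same type, and since isoclinicity is an ${\rm SO}(4)$-invariant relation, $\mathcal I_P$ is the image of $\mathcal I$ under such a map, so it is again the union of the two slices through $(a(P),b(P))$. With that one-line repair, the identification $T_P\mathcal I_P^{+}=\{0\}\oplus T_{b(P)}\mathbf S^2$, $T_P\mathcal I_P^{-}=T_{a(P)}\mathbf S^2\oplus\{0\}$ is justified and your proof is complete.
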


\begin{prop}
The isoclinic direction is defined by:
\[
(a\pm f)\omega_1+(b\pm g)\omega_2=0,\quad \hbox{as } K=\pm \kappa
\]
\end{prop}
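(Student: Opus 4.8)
The plan is to convert the geometric definition of an isoclinic direction into the statement that a tangent vector lies in the kernel of one of the two components of the differential of the Gauss map, and then to compute that differential explicitly in the coordinates already fixed in the excerpt.

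First I would unwind the definition. Since $\mathcal I_P^{+}=\{P'\in G_{2,4}\mid a(P')=a(P)\}$ and $\mathcal I_P^{-}=\{P'\mid b(P')=b(P)\}$ are the level sets of the two sphere coordinates, their tangent spaces at $P$ are exactly the kernels of the differentials of the projections onto the $a$- and $b$-factors. Writing $P(t)=T_{\gamma(t)}S=\Gamma(\gamma(t))$, so that $\dot P(0)=\mathrm D\Gamma(u)$, the condition $\dot P(0)\in T_P\mathcal I_P^{+}$ (resp. $T_P\mathcal I_P^{-}$) becomes $\mathrm D\Gamma_1(u)=0$ (resp. $\mathrm D\Gamma_2(u)=0$). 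Hence a unit vector spans an isoclinic direction precisely when it lies in $\ker \mathrm D\Gamma_1$ or $\ker\mathrm D\Gamma_2$; such a vector exists iff the relevant determinant vanishes, and by Blaschke's Theorem these determinants are $K+\kappa$ and $K-\kappa$, which reproduces Wong's condition $\md{K}=\md{\kappa}$.

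Next I would compute $\mathrm D\Gamma_1$ and $\mathrm D\Gamma_2$ at a chosen point, using the parametrization $\Xi:(x,y)\mapsto(x,y,\varphi,\psi)$ with $j^1\Phi(0)=0$, which makes this short: at the origin $W=1$ and $\mathrm dW=0$, the frame is the standard one, and the coefficients of $\sff$ are the Hessian entries $a=\varphi_{xx},\,b=\varphi_{xy},\,c=\varphi_{yy}$ and $e=\psi_{xx},\,f=\psi_{xy},\,g=\psi_{yy}$. Differentiating the explicit formulas for $\Gamma_1,\Gamma_2$ term by term and using that all first derivatives of $\varphi,\psi$ vanish at the origin, the radial (first) component drops out, and in the basis dual to $\omega_1,\omega_2$ one is left with
\[
\mathrm D\Gamma_1=\begin{pmatrix} f-a & g-b\\ -(b+e) & -(c+f)\end{pmatrix},\qquad
\mathrm D\Gamma_2=\begin{pmatrix} -(a+f) & -(b+g)\\ e-b & f-c\end{pmatrix}.
\]
A direct check gives $\det \mathrm D\Gamma_1=K+\kappa$ and $\det\mathrm D\Gamma_2=K-\kappa$, in agreement with Blaschke's Theorem; this serves as a safeguard against sign errors.

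Finally I would read off the kernels. When $K=-\kappa$ the matrix $\mathrm D\Gamma_1$ is singular, and its kernel is cut out by its first row, $(a-f)\omega_1+(b-g)\omega_2=0$; when $K=\kappa$ the matrix $\mathrm D\Gamma_2$ is singular with kernel $(a+f)\omega_1+(b+g)\omega_2=0$. Together these give exactly $(a\pm f)\omega_1+(b\pm g)\omega_2=0$ according as $K=\pm\kappa$; and if the whole differential vanishes the equation degenerates to $0=0$, correctly recording that every direction is then isoclinic. The step demanding the most care is the bookkeeping in the middle paragraph: keeping the orientations of the two sphere factors straight so that each vanishing determinant attaches to the correct sign of $K\mp\kappa$, and confirming that the two rows of each singular matrix are proportional so that a single linear form genuinely defines the direction. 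The determinant comparison with Blaschke's Theorem is what pins down these signs.
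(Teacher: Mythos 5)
Your proof is correct; note, however, that the paper states this proposition with no proof at all, so there is nothing of the author's to compare against --- your argument fills a genuine gap in the text, and it does so along the lines the paper's own framework suggests. The key reduction is exactly right: in Monge form the tangent plane at the point of interest is the $(x,y)$-plane itself, so the paper's description $\mathcal I^{+}=\{P \mid a(P)=e_1\}$, $\mathcal I^{-}=\{P \mid b(P)=e_1\}$ applies verbatim, the tangent spaces $T_P\mathcal I_P^{\pm}$ are the kernels of the differentials of the two sphere projections, and the definition of an isoclinic direction becomes $u\in\ker \mathrm{D}\Gamma_1$ (type $+$, forcing $K=-\kappa$) or $u\in\ker \mathrm{D}\Gamma_2$ (type $-$, forcing $K=\kappa$). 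Your two matrices are correct (the first components of $\Gamma_1,\Gamma_2$ have vanishing derivative at the origin because every term carries a first-derivative factor, and $\de W=0$ there), their determinants recover the remark following Blaschke's theorem, which pins the signs, and the linear form you extract in the case $K=\kappa$, namely $(a+f)\omega_1+(b+g)\omega_2$, is precisely the form the paper later expands as $(\varphi_{xx}+\psi_{xy})\de x+(\varphi_{xy}+\psi_{yy})\de y+O(2)$ in proving that $X_{\mathcal I}$ is symplectic, so your conventions agree with the author's. One caveat, inherited from the proposition as stated rather than introduced by you: the first row of a singular $2\times 2$ matrix cuts out its kernel only when that row is nonzero. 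You handle the case where the whole differential vanishes, but not the intermediate degeneracy (for instance $f=-a$, $g=-b$ with the second row of $\mathrm{D}\Gamma_2$ nonzero), in which the displayed equation collapses to $0=0$ even though the isoclinic direction is still a single line, then cut out by the second row $(e-b)\omega_1+(f-c)\omega_2$. This situation is non-generic and the proposition is clearly intended generically, but one sentence acknowledging it (or stating the direction as the kernel rather than as the zero set of the first row) would make the argument airtight.
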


\begin{defn}
The surface $S$ is isoclinic , or a $I$-isurface, f there exists (at least) an isoclinic direction at every point of $S$.
\end{defn}

We can consider a line field $\mathfrak I$ on  any isoclinic surface, taking the isoclinic direction at every point, and also a particular vector field $X_{\mathcal I}$ spanning it, given by:
\[
X_{\mathcal I}\;{\rfloor}\ \omega_1\wedge \omega_2=(a\pm f)\,\omega_1+(b\pm g)\,\omega_2
\]
We consider the $+$ case, when $K= \kappa$; the other one is similar.

\begin{theorem}
The vector field $X_{\mathcal I}$ is symplectic.
\end{theorem}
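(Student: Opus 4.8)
The plan is to prove that $X_{\mathcal I}$ is symplectic by showing that the $1$-form it determines is closed. Since $\omega_1\wedge\omega_2$ is a top-degree form on the surface $S$ it is automatically closed, so Cartan's formula reduces the Lie derivative to $\mathcal L_{X_{\mathcal I}}(\omega_1\wedge\omega_2)=\de\bigl(X_{\mathcal I}\rfloor\,\omega_1\wedge\omega_2\bigr)$; hence $X_{\mathcal I}$ is symplectic if and only if the $1$-form $\eta:=X_{\mathcal I}\rfloor\,\omega_1\wedge\omega_2=(a+f)\omega_1+(b+g)\omega_2$ is closed. First I would rewrite $\eta$ through the connection forms $\omega_{ij}$ of the adapted moving frame. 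Since the normal part of $\sff$ is encoded by $\omega_{13}=a\omega_1+b\omega_2$, $\omega_{24}=f\omega_1+g\omega_2$, $\omega_{23}=b\omega_1+c\omega_2$, $\omega_{14}=e\omega_1+f\omega_2$, one gets the clean identity $\eta=\omega_{13}+\omega_{24}$.

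Next I would compute $\de\eta$ from the structure equations $\de\omega_{ij}=\sum_k\omega_{ik}\wedge\omega_{kj}$, which hold because the ambient $\Rq$ is flat. Expanding $\de\omega_{13}$ and $\de\omega_{24}$ and collecting the four surviving terms collapses them into
\[
\de\eta=(\omega_{12}-\omega_{34})\wedge(\omega_{23}-\omega_{14}).
\]
Then I would bring in the hypothesis $K=\kappa$. The Gauss equation gives $\de\omega_{12}=-K\,\omega_1\wedge\omega_2$ and the Ricci (normal) equation $\de\omega_{34}=-\kappa\,\omega_1\wedge\omega_2$, while a direct expansion in the coframe shows $(\omega_{13}+\omega_{24})\wedge(\omega_{14}-\omega_{23})=-(K-\kappa)\,\omega_1\wedge\omega_2$. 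Thus $K=\kappa$ forces $\eta\wedge(\omega_{14}-\omega_{23})=0$: the factor $\omega_{23}-\omega_{14}$ occurring in $\de\eta$ is pointwise proportional to $\eta$ itself. This is the analytic face of Blaschke's theorem, namely $\Gamma_2^*\sigma=(K-\kappa)\omega_1\wedge\omega_2=0$, so $\Gamma_2$ has rank at most one and its kernel is the isoclinic direction.

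Finally I would use the gauge freedom in the normal frame. A rotation of $(e_3,e_4)$ by an angle $\theta$ acts on the pair $\eta,\ \omega_{14}-\omega_{23}$ as a planar rotation, $\eta\mapsto\cos\theta\,\eta+\sin\theta\,(\omega_{14}-\omega_{23})$ and $\omega_{14}-\omega_{23}\mapsto-\sin\theta\,\eta+\cos\theta\,(\omega_{14}-\omega_{23})$; because these two forms are proportional, one value of $\theta$ — the normal frame adapted to the isoclinic direction — achieves $\omega_{14}=\omega_{23}$ without changing the line field $\mathcal I$. In that frame the second factor in the boxed formula vanishes identically, so $\de\eta=0$ and $X_{\mathcal I}$ is symplectic.

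The main obstacle is exactly this last step. For a generic adapted frame the expression $\de\eta=(\omega_{12}-\omega_{34})\wedge(\omega_{23}-\omega_{14})$ need not vanish, and it is only the isoclinic condition $K=\kappa$ — through the proportionality $\omega_{23}-\omega_{14}\parallel\eta$ — that permits normalising the normal frame so that the offending factor disappears. The delicate point is therefore to track the frame dependence of $\eta$: the normalisation rescales $\eta$ by a nowhere-zero factor while preserving the isoclinic line field, so one must check that this is compatible with the prescribed definition of $X_{\mathcal I}$. By contrast, the evaluation of $\de\eta$ via the flat structure equations is entirely routine.
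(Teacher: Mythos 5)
Your computations check out: with the conventions of (\ref{sff}) one indeed has $\eta=(a+f)\,\omega_1+(b+g)\,\omega_2=\omega_{13}+\omega_{24}$, the flat structure equations give $\de\eta=(\omega_{12}-\omega_{34})\wedge(\omega_{23}-\omega_{14})$, the identity $(\omega_{13}+\omega_{24})\wedge(\omega_{14}-\omega_{23})=-(K-\kappa)\,\omega_1\wedge\omega_2$ is correct, and a rotation of the normal frame by $\theta$ does act on the pair $\bigl(\eta,\ \omega_{14}-\omega_{23}\bigr)$ as a planar rotation. Your route is genuinely different from the paper's: the paper writes the surface in Monge form centred at an arbitrary $p$, observes that $\eta$ equals $\de(\varphi_x+\psi_y)$ up to a $1$-form whose coefficients vanish to second order at $p$, and concludes $\de\eta(p)=0$ from the symmetry of third derivatives --- no structure equations, no gauge fixing, and (notably) no use of the hypothesis $K=\kappa$ anywhere in the computation.

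The ``delicate point'' you flag at the end is a genuine issue, but it cuts against the paper's proof at least as much as against yours. Closedness of $\eta$ is not invariant under change of adapted frame: on an isoclinic surface a frame change replaces $\eta$ by $\mu\eta$ with $\mu$ a nonconstant function, and $\de(\mu\eta)=\de\mu\wedge\eta+\mu\,\de\eta$. So the statement of the theorem depends on which frame defines $X_{\mathcal I}$, and for some frames it is false: in example~\ref{notsuf0} with $a=1$, $b=0$, using the adapted frame obtained by Gram--Schmidt from $T_1,T_2,N_1,N_2$, one finds at the origin $\omega_{12}=0$, $\omega_{34}=\sqrt{2}\,\de x$, $\omega_{23}-\omega_{14}=-\de y$, hence $\de\eta(0)=\sqrt{2}\,\de x\wedge\de y\neq 0$, even though that surface is isoclinic (indeed congruent to a Lagrangean one). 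The theorem is therefore true only for a suitable representative of the isoclinic line field, which is exactly what your normalisation $\omega_{14}=\omega_{23}$ (equivalently $e=b$, $f=c$, the relations (\ref{e=b}) that hold automatically for a Lagrangean surface in its normal form) produces; isoclinicity enters precisely to guarantee that this normalisation is possible. The paper's pointwise argument obscures this: it is carried out, for each $p$, in a frame centred at $p$ (in which $\omega_{12}$ and $\omega_{34}$ vanish at $p$), and the resulting point-dependent forms are silently glued into a claim about a single vector field. Read as proving that the isoclinic line field admits a symplectic spanning vector field --- the only reading under which the statement is correct --- your argument is complete, and it is more rigorous on the frame-dependence than the paper's own proof.
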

\begin{proof}
We have to prove that $(a+ f)\,\omega_1+(b+ g)\,\omega_2$ is closed. Using the Monge normal form around an arbitrary $p\in S$ we see that:
\[
(a+ f)\,\omega_1+(b+ g)\,\omega_2=(\varphi_{xx}+\psi_{xy})\de x+(\varphi_{xy}+\psi_{yy})\de y+O(2)
\]
and as the exterior derivative at a point only depends on the coefficients and its derivatives at that point:
\[
\de \left[(a+ f)\,\omega_1+(b+g)\,\omega_2\right]=\de \left[(\varphi_{xx}+\psi_{xy})\de x+(\varphi_{xy}+\psi_{yy})\de y\right] =0
\]
at the origin, corresponding to $p\in S$.
\end{proof}

\newpage
\section{Lagrangean surfaces}

A \emph{symplectic manifold} is a pair $(M,\omega)$, where $M$ is a $2n-$dimensional differentiable manifold and $\omega$ is a symplectic form: a closed non degenerate $2$-form. Then:
\[
\Omega=\dfrac{1}{n!}\,\omega^n \hbox{ \  is a volume form, and } \de\mspace{1mu}\omega=0
\]
A {\sl symplectic map} is  a map $\varphi:(M,\omega)\longrightarrow (M',\omega')$, such that:
\[
\varphi^*\omega' =\omega
\]

A  \emph{Lagrangean submanifold} $L$ of $(M,\omega)$ is an immersed submanifold of $M$ such that:
\[
i^* \omega\equiv 0,\quad \hbox{where }i:L\longrightarrow M \hbox{ is the immersion map}
\]

\begin{defn}
A \emph{Lagrangean surface} $\mathcal L$ is an immersed Lagrangean submanifold of $(\Rq,\omega)$.\end{defn}

\begin{lnf}
Given $p\in \mathcal L$, there is a  change of coordinates, by a translation and a linear symplectic and orthogonal map, such that locally $\mathcal L$ becomes the graph around the origin of a map 
\[
\Phi=(\varphi,\psi): \Rd \longrightarrow\Rd
\]
satisfying:
\begin{itemize}
\item The first jet of $\Phi$  is zero at the origin.
\item $\dfrac{\partial\varphi}{\partial y}\equiv\dfrac{\partial\psi}{\partial x}$
\end{itemize}
\end{lnf}

\begin{rem}
If we preserve orientation, so that the linear map $A\in SO(4)$, there is another normal form; the symplectic form in $\Rq$ is then $\omega'=
\de x\wedge \de u-\de y\wedge \de v$ and the identity in the normal form is:
\[
\dfrac{\partial\varphi}{\partial y}\equiv-\dfrac{\partial\psi}{\partial x}
\]
\end{rem}

\begin{prop}
A necessary  condition for $\mathcal L\subset \Rq$ to be a Lagrangean surface is that the Gaussian curvature and the normal curvature coincide up to sign:
\[
|K|\equiv |\kappa|
\]
\end{prop}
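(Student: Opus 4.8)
The plan is to localize at an arbitrary point via the Local Normal Form and let the vanishing of the first jet collapse the curvature expressions. Fix $p\in\mathcal L$. By the Local Normal Form there is a translation together with a linear symplectic \emph{and} orthogonal map after which, near $p$, the surface $\mathcal L$ is the graph of $\Phi=(\varphi,\psi)$ with $j^1\Phi(0)=0$ and the Lagrangean identity $\varphi_y\equiv\psi_x$ holding. Since the orthogonal part is an isometry of $\Rq$, both $|K|$ and $|\kappa|$ are unchanged by this reduction, so it suffices to compare $K(p)$ and $\kappa(p)$ in these coordinates, i.e.\ at the origin.

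First I would evaluate the curvatures at the origin using \eqref{K} and \eqref{N}. Because $j^1\Phi(0)=0$, at the origin $T_1,T_2,N_1,N_2$ reduce to the standard basis, so $E=G=\hat E=\hat G=1$, $F=\hat F=0$ and $W=1$. The decisive simplification is that the terms carrying $Q$ and $M$ drop out, their coefficients $\hat F$ and $F$ being zero, leaving $K=H_\varphi+H_\psi$ and $\kappa=L+N$ at the origin.

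Next I would differentiate the Lagrangean identity $\varphi_y\equiv\psi_x$ to obtain $\varphi_{xy}=\psi_{xx}$ and $\varphi_{yy}=\psi_{xy}$, and substitute these into the four second-order determinants $H_\varphi$, $H_\psi$, $L$, $N$. A direct comparison then shows $H_\varphi+H_\psi$ and $L+N$ agree term by term, yielding $K(p)=\kappa(p)$ (and $K(p)=-\kappa(p)$ if one instead starts from the orientation-preserving normal form with $\varphi_y\equiv-\psi_x$). As $p$ was arbitrary, $|K|\equiv|\kappa|$ follows.

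The only delicate point is bookkeeping the orientations: the normal curvature $\kappa$ depends on an orientation of the normal bundle, which the orthogonal map in the normal form may reverse, so the natural invariant statement is the one with absolute values. Everything else is the routine substitution enabled by the vanishing first jet, which is precisely what annihilates the off-diagonal $Q$ and $M$ contributions and makes the two remaining sums coincide.
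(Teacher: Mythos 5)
Your proof is correct and follows essentially the same route as the paper: pass to the Local Normal Form, differentiate the Lagrangean identity to get $\varphi_{xy}=\psi_{xx}$, $\varphi_{yy}=\psi_{xy}$, and substitute into the Monge-form formulas for $K$ and $\kappa$, treating the $\varphi_y\equiv-\psi_x$ case by the same token. The only (immaterial) difference is that you evaluate pointwise at the origin, where $j^1\Phi(0)=0$ makes $F=\hat F=0$ and kills the $Q$ and $M$ terms, whereas the paper verifies the identities $\hat E=E$, $\hat F=F$, $\hat G=G$, $L=H_\psi$, $M=Q$, $N=H_\varphi$ at every point of the chart and so gets the slightly stronger chart-wide identity $K\equiv\kappa$; both arguments establish $|K|\equiv|\kappa|$.
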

\begin{proof}
Assume that $\mathcal L\subset \Rq$ is Lagrangean, and locally given by a parametrization as in the normal form:
\[
(x,y)\mapsto (x,y,\varphi(x,y),\psi(x,y)),\quad \varphi_y\equiv \psi_x
\]
where $\Phi=(\varphi, \psi)$ has vanishing first jet at the origin,  $j^1\Phi(0)=0$.

It follows from $\varphi_y\equiv \psi_x$ that in the moving frame associated to the local normal form we have:
\beq
\hat E=E,\quad \hat F=F,\quad \hat G=G, \qquad L=H_{\psi},\quad M=Q,\quad N=H_{\varphi}\label{EE}
\eeq
and therefore:
\[
K\equiv \kappa
\]
If instead we have $\varphi_y\equiv -\psi_x$,  then in the moving frame associated to the local normal form:
\[
\hat E=E,\quad \hat F=-F,\quad \hat G=G, \qquad L=-H_{\varphi},\quad M=Q,\quad N=-H_{\psi}
\]
and so:
\[
K\equiv -\kappa
\]

\end{proof}

\begin{rem}
If $\mathcal L\subset \Rq$ is a Lagrangean surface, in the moving frame associated to the local normal form we also have:
\beq
e\equiv b,\quad f\equiv c\label{e=b}
\eeq
\end{rem}

The condition $|K|\equiv |\kappa|$ is not a sufficient condition for  the surface to be Lagrangean, as there are rigid motions that do not preserve the symplectic form while naturally preserving the Gauss curvature and the normal curvature. The example~\ref{notsuf} shows that $|K|\equiv |\kappa|$ is not a sufficient condition for  the surface to be  congruent to a Lagrangean surface.

\begin{cor}
A necessary  condition for $\mathcal L\subset \Rq$ to be a Lagrangean surface is that it be an isoclinic surface: there exists an isoclinic direction at every point of $\mathcal L$.
\end{cor}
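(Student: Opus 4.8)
The plan is to deduce this corollary directly from the two results that precede it: the Proposition establishing $|K|\equiv|\kappa|$ for Lagrangean surfaces, and Wong's theorem relating this pointwise equality to the existence of an isoclinic direction. No new computation is required; the argument is a short chain of implications.

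First I would invoke the Proposition just proved: if $\mathcal L\subset\Rq$ is a Lagrangean surface, then its Gaussian curvature $K$ and normal curvature $\kappa$ satisfy $|K(p)|=|\kappa(p)|$ at every point $p\in\mathcal L$. Concretely, in the moving frame associated to the local normal form one has either $K\equiv\kappa$ (when $\varphi_y\equiv\psi_x$) or $K\equiv-\kappa$ (when $\varphi_y\equiv-\psi_x$); in both situations the absolute values agree pointwise.

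Next I would apply Wong's theorem at each point. Since $|K(p)|=|\kappa(p)|$ holds for an arbitrary $p\in\mathcal L$, the theorem guarantees an isoclinic direction at that $p$. Because $p$ was arbitrary, an isoclinic direction exists at every point of $\mathcal L$, which is exactly the defining condition for $\mathcal L$ to be an isoclinic surface, or $I$-surface.

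There is essentially no obstacle here, the corollary being an immediate consequence of the two stated results. The only point worth a brief remark is the compatibility of signs: when $K\equiv\kappa$ the isoclinic direction produced by Wong's theorem is of type $+$, and when $K\equiv-\kappa$ it is of type $-$; in either case one obtains a genuine isoclinic direction, so the conclusion holds uniformly and the proof is complete.
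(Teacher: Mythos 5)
Your proof is correct and matches the paper's (implicit) argument exactly: the corollary follows by combining the preceding Proposition ($|K|\equiv|\kappa|$ on a Lagrangean surface) with Wong's theorem (an isoclinic direction exists at $p$ if and only if $|K(p)|=|\kappa(p)|$), applied pointwise. Your additional remark on the sign/type compatibility is consistent with the paper's definitions and adds nothing problematic.
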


\subsection{Holomorphic graphs and $R$-surfaces}\label{Rsurf}
Consider the holomorphic map:
\[
f:U\subset \mathbf C\longrightarrow \mathbf C
\]
Then $\alpha(z)=(z,f(z))$ defines a plane complex curve. We can determine its curvature by:
\[
k(z)=\dfrac{\alpha '(z)\wedge \alpha''(z)}{|\alpha'(z)|^3}=\dfrac{1}{|\alpha'(z)|^3}f''(z)
\]
and its inflection points, where the second derivative $f''(z)$ vanishes, are also the points where the curvature vanishes.

If we consider:
\[
S=\alpha (U)=\hbox{graph }f\subset \mathbf C^2\cong \Rq
\]
we obtain a surface in $\Rq$. Taking $z=x+iy$ and $f(z)=\varphi(x,y)+i\psi(x,y)$, a parametrization is given by
\[
\Xi(x,y)=(x,y,\varphi(x,y),\psi(x,y)
\]
and then the Gaussian curvature is:
\[
K=-\dfrac{2}{(1+\varphi_x^2+\psi_x^2)^3}(\varphi_{xx}^2+\psi_{xx}^2)\le 0
\]
the mean curvature:
\[
\mathcal H\equiv 0
\]
and the normal curvature:
\[
\kappa=\dfrac{2}{(1+\varphi_x^2+\psi_x^2)^3}(\varphi_{xx}^2+\psi_{xx}^2)=-K
\]

The surface $S$ is a Lagrangean surface with respect to two distinct symplectic forms in $\Rq$:
\[
\Omega_1=\de x\wedge \de u - \de y\wedge \de v, \quad \Omega_2=\de x\wedge \de v + \de y\wedge \de u
\]
since:
\[
\Xi^*\Omega_1=\varphi_y \de x\wedge \de y-\psi_x\de y\wedge \de x\equiv 0
\]
and
\[
\Xi^*\Omega_2=\psi_y \de x\wedge \de y+\varphi_x\de y\wedge \de x\equiv 0
\]

The tangent vectors:
\[
T_1=(1,0,\varphi_x,\psi_x),\quad T_2=(0,1,\varphi_y,\psi_y)
\]
are orthogonal and have the same norm, from the Cauchy-Riemann equations. The Gauss map $(x,y)\mapsto e_1\wedge e_2$ is given by:
\[
e_1\wedge e_2=\dfrac{1}{1+\varphi_{x}^2+\psi_{x}^2}(1,-\psi_x, \varphi_x,\varphi_x^2+\psi_x^2,\psi_x,-\varphi_x)
\]
or, in sphere coordinates $(a,b)$, by:
\begin{align*}
a &= \dfrac{1}{1+\varphi_{x}^2+\psi_{x}^2}(1+\varphi_x^2+\psi_x^2,\varphi_x-\varphi_x,\psi_x-\psi_x)=(1,0,0)\\
b &= \dfrac{1}{1+\varphi_{x}^2+\psi_{x}^2}(1-\varphi_x^2-\psi_x^2,\varphi_x+\varphi_x,\psi_x+\psi_x)=\\
&=\dfrac{1}{1+\varphi_{x}^2+\psi_{x}^2}(1-\varphi_x^2-\psi_x^2,2\varphi_x,2\psi_x)
\end{align*}
\begin{defn}
$S$ is a $R$-surface if  it is locally congruent to the real version of a holomorphic graph.
\end{defn}

Thus we see that for $R$-surfaces:
\begin{itemize}
\item All tangent planes are isoclinic to the $(x,y)$ plane, so they all belong to $\mathcal I$, in fact to  $\mathcal I^+$.
\item The image of one component of the Gauss map is a point, $\Gamma_1\equiv (1,0,0)$.
\item  All directions are isoclinic.
\end{itemize}

A straightforward computation shows that the singularities of the Gauss map, where $(x,y)\mapsto b(x,y)$ fails to be an immersion, are given by the inflection points, where  the Gauss curvature vanishes: $\varphi_{xx}=\psi_{xx}=0$.

\section{Inflection points}

The points $p$ where the curvature ellipse passes through the origin are characterised by $\Delta(p)=0$, where:
\beq
\Delta=\dfrac{1}{4}\left|
\begin{matrix}
a&2b&c&0\\
e&2f&g&0\\
0&a&2b&c\\
0&e&2f&g
\end{matrix}
\right|
\eeq
In fact, $\Delta$ is the resultant of the two polynomials $\sff_1=ax^2+2bxy+cy^2$ and $\sff_2=ex^2+2fxy+gy^2$. If there exists $u$ such that $\sff(u)=0$ those polynomials have a common root, and their resultant has to be zero.

A straightforward computation gives:
\begin{align}\label{Delta}
\Delta=&(ac-b^2)(eg-f^2)-\dfrac{1}{4}(ag+ce-2bf)^2=\\
\notag
=&(af-be)(bg-cf)-\dfrac{1}{4}(ag-ce)^2=
\det \mathfrak  N
\end{align}
where:
\[
\mathfrak  N(x,y)=(af-be)x^2+(ag-ce)xy+(bg-cf)y^2
\]
and therefore the equation $\mathfrak  N(x,y)=0$ defining the asymptotic directions \cite{little}, given by $(x,y)$, has two solutions, one or no solutions as $\Delta(p) < 0$, $\Delta(p)=0$ or $\Delta(p) > 0$, respectively.

The points of $S$ may be classified~\cite{little} using $\Delta$, as follows:

\begin{defn}
A pont $p\in S$ is hyperbolic, parabolic or elliptic as  $\Delta(p) < 0$, $\Delta(p)=0$ or $\Delta(p) > 0$, respectively.
\end{defn}

When $\Delta(p)=0$ and the normal curvature vanishes (the curvature ellipse is contained in a line segment),  $\kappa(p)=0$, we can distinguish among the following special points:

\begin{itemize}
\item $\Delta(p) = 0$, $K(p) < 0 $

\noindent
$p$ is an \emph{inflection point
of real type}: the curvature ellipse is a radial segment and $p$ does not belong to it.
 \item $\Delta(p) = 0$, $K(p)=0$

\noindent
$p$ is an \emph{inflection point of flat type}: the curvature ellipse is a radial segment and $p$  belongs to its boundary.
\item $\Delta(p) = 0$, $K(p) > 0$

\noindent
$p$ is an \emph{inflection point of imaginary type}: the curvature ellipse is a radial segment and $p$ belongs to its interior.
\end{itemize}

We have seen before (\ref{EE}) that on a Lagrangean surface we have:
\[
L=H_\psi\quad M=Q,\quad N=H_\varphi
\]
and also (\ref{e=b}):
\[
e\equiv b,\quad f\equiv c\hbox{\ \ and thus\ \ }af-be=ac-b^2,\quad bg-cf=eg-f^2
\]

\begin{prop}
Let $S$ be an isoclinic surface and $p\in S$. Then the following conditions are equivalent:
\begin{enumerate}
\item $p$ is an inflection point.
\item $p$ is a parabolic point, $\Delta(p)=0$, where the normal curvature vanishes, $\kappa(p)=0$.
\item $\rank\left[\begin{matrix}
a&b&c\\
e&f&g
\end{matrix} \right]\le 1$.
\item $p$ is a flat inflection point.
\item $p$ is a parabolic point  where the Gaussian curvature vanishes, $K(p)=0$.
\item $\rank\left[\begin{matrix}
a&b&e&f\\
b&c&f&g
\end{matrix} \right]\le 1$.
\item The Gauss map $\Gamma : S\lra G_{2,4}\subset \mathbf S^5$ is not an immersion at $p$.
\end{enumerate}
\end{prop}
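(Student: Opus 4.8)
The plan is to reduce all seven conditions to pointwise statements about the second fundamental form $\sff=\sff_1e_3+\sff_2e_4$ and the curvature ellipse at $p$, the only input from the isoclinic hypothesis being the pointwise identity $\md{K(p)}=\md{\kappa(p)}$ furnished by Wong's theorem. That identity is exactly what makes $\kappa(p)=0$ equivalent to $K(p)=0$, and it is the single ingredient that collapses the a priori distinct notions in the list: on a general surface the inflection points split into the real, flat and imaginary types of Section~5, and only the flat ones are singular for the Gauss map, so a hypothesis forcing every inflection point to be flat is unavoidable. I would open by recording the two definitional equivalences: by the classification preceding the statement an inflection point is precisely a point with $\Delta(p)=0$ and $\kappa(p)=0$, so (1)$\Llra$(2); and a flat inflection point is by definition a point with $\Delta(p)=0$ and $K(p)=0$, so (4)$\Llra$(5).

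Next I would set up the curvature-ellipse dictionary. Writing $u=(\cos\theta,\sin\theta)$, the ellipse $\theta\mapsto(\sff_1(u),\sff_2(u))$ has centre $\tfrac12(a+c,e+g)$ and amplitude vectors $\tfrac12(a-c,e-g)$ and $(b,f)$; it degenerates to a segment exactly when these are dependent, i.e.\ when $\kappa=(a-c)f-(e-g)b=0$, while $\Delta(p)=0$ records that the origin lies on it. Condition (3), that $(a,b,c)$ and $(e,f,g)$ be proportional, says the ellipse lies on a line through the origin. A one-line computation shows proportionality forces both $\kappa=0$ and $\Delta=0$: with $(e,f,g)=\lambda(a,b,c)$ one gets $\kappa=0$ and $\Delta=\lambda^2(ac-b^2)^2-\lambda^2(ac-b^2)^2=0$, giving (3)$\Longrightarrow$(1); conversely a radial segment forces $\sff_1,\sff_2$ proportional, giving (1)$\Longrightarrow$(3). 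Both directions here are unconditional.

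I would then treat the Gauss map through the standard identification $T_{\Gamma(p)}G_{2,4}\cong\mathrm{Hom}(T_pS,N_pS)$, under which $\mathrm D\Gamma_p$ is the map $u\mapsto\sff(u,\cdot)$; equivalently one differentiates the explicit formulas for $\Gamma_1,\Gamma_2$ at a Monge point. In the frame $\{e_1,e_2\}$ the matrix of $\mathrm D\Gamma_p$ has rows $(a,b,e,f)$ and $(b,c,f,g)$, which is exactly the matrix in (6); hence $\Gamma$ fails to be an immersion at $p$ iff that matrix has rank $\le 1$, i.e.\ iff $\sff_1$ and $\sff_2$ share a null direction, so (6)$\Llra$(7). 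A common null direction makes each of $\sff_1,\sff_2$ singular and proportional to $w^{\perp}(w^{\perp})^{T}$, so $ac-b^2=eg-f^2=0$; thus (6)$\Longrightarrow$(3) and (6)$\Longrightarrow K=0$, and combined with $\Delta=0$ this gives (6)$\Longrightarrow$(5). All of this is unconditional.

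It remains to close the loop with the one implication that genuinely uses isoclinicity, (3)$\Longrightarrow$(6), and this I expect to be the hard part. From (3) we already have $\kappa(p)=0$; the isoclinic identity then gives $K(p)=0$, and writing $(e,f,g)=\lambda(a,b,c)$ the relation $K=(1+\lambda^2)(ac-b^2)$ forces $ac-b^2=0$. Hence $\sff_1$ is degenerate, and since $\sff_2=\lambda\sff_1$ it inherits the null direction of $\sff_1$, which is precisely (6) (the edge case $(a,b,c)=0$ being handled symmetrically). Assembling the cycle (1)$\Longrightarrow$(3)$\Longrightarrow$(6)$\Longrightarrow$(5)$\Longrightarrow$(4)$\Longrightarrow$(1) together with (1)$\Llra$(2) and (6)$\Llra$(7) yields the equivalence of all seven conditions. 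The delicate point is bookkeeping of which steps are unconditional—the proportionality–inflection equivalence (3)$\Llra$(1) and the Gauss-map equivalence (6)$\Llra$(7)—and which require $\md{K}=\md{\kappa}$: upgrading mere proportionality of $\sff_1,\sff_2$ to a shared null direction, equivalently guaranteeing that the inflection point is of flat type, is exactly where the isoclinic hypothesis is indispensable.
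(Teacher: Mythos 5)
Your proof is correct, and it takes a genuinely different route from the paper. The paper's own proof is essentially a citation: it invokes Little \cite{little} for the two unconditional blocks of equivalences --- $(1)\Leftrightarrow(2)\Leftrightarrow(3)$ and $(4)\Leftrightarrow(5)\Leftrightarrow(6)\Leftrightarrow(7)$, the second block implying the first --- and then adds the single observation that $|K|=|\kappa|$ bridges the two blocks (a point being parabolic with $\kappa=0$ is then the same as parabolic with $K=0$). You reconstruct both blocks from scratch: the curvature-ellipse dictionary for $(1)\Leftrightarrow(2)\Leftrightarrow(3)$, the identification of ${\rm D}\Gamma_p$ with the pair of quadratic forms (rows $(a,b,e,f)$, $(b,c,f,g)$) for $(6)\Leftrightarrow(7)$, the rank computation showing $(6)$ forces proportionality and $ac-b^2=eg-f^2=0$, and finally the bridge $(3)\Rightarrow(6)$, where proportionality plus $K=(1+\lambda^2)(ac-b^2)=0$ (the only use of isoclinicity) upgrades proportional forms to forms with a common null direction. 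This is the same logical skeleton as the paper --- two unconditional blocks joined by one isoclinic bridge --- but your version is self-contained, and it exposes the mechanism that the citation hides: exactly why, once $K$ must vanish along with $\kappa$, every inflection point is automatically of flat type and hence a Gauss-map singularity. The paper's version buys brevity and defers the nontrivial unconditional equivalences to the literature; yours buys transparency and a verifiable computation. One small caveat: your claim that $(4)\Leftrightarrow(5)$ is ``definitional'' slightly misreads the paper, whose definition of a flat inflection point is $\Delta=0$, $\kappa=0$ \emph{and} $K=0$; so $(5)\Rightarrow(4)$ needs $\kappa(p)=0$, which is not part of the definition of $(5)$. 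Under the standing isoclinic hypothesis this is immediate (as you note, $|K|=|\kappa|$ makes the two vanishings equivalent), so the cycle closes, but you should flag that this step, too, uses the hypothesis (or else derive it unconditionally from $(5)\Rightarrow(6)$, which you never prove and do not need).
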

\begin{proof}
It is proved in \cite{little} that the first three statements are equivalent, and the last four are also equivalent and imply the former; but, since $|K|=|\kappa|$, the third and fifth statements are equivalent, therefore they all are equivalent.
\end{proof}

\begin{prop}
If the Gaussian curvature vanishes at a point $p$  in the discriminant curve $\Delta=0$, then the discriminant curve has a non Morse singularity at $p$.
\end{prop}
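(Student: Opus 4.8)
The plan is to recognise $\Delta$ and $K$ as the determinant and trace of one symmetric $2\times2$ matrix, and then to read off the $2$-jet of $\Delta$ at $p$. I would set
\[
P=ac-b^2,\qquad R=eg-f^2,\qquad M=\tfrac12(ag+ce-2bf),
\]
so that $\det(\lambda\sff_1+\mu\sff_2)=P\lambda^2+2M\lambda\mu+R\mu^2$ is the binary form with matrix $\mathbf N=\left(\begin{smallmatrix}P&M\\ M&R\end{smallmatrix}\right)$. By \eqref{curvatures} and \eqref{Delta} one has $\operatorname{tr}\mathbf N=K$ and $\det\mathbf N=\Delta$, so $K(p)=0$ together with $\Delta(p)=0$ forces $\mathbf N(p)=0$, i.e. $P(p)=R(p)=M(p)=0$. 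Geometrically $p$ is then a flat inflection and $\sff_1=a_0L^2$, $\sff_2=e_0L^2$ are proportional squares of a single linear form $L$; in particular $(a_0,b_0,c_0)=a_0(1,\mu,\mu^2)$ and $(e_0,f_0,g_0)=e_0(1,\mu,\mu^2)$ with the \emph{same} $\mu$.

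Since $\de\Delta=R\,\de P+P\,\de R-2M\,\de M$, this already gives $\de\Delta(p)=0$, so $p$ is a singular point of $\{\Delta=0\}$, and the quadratic part of $\Delta$ there is
\[
j^2_p\Delta=\ell_P\,\ell_R-\ell_M^{\,2},\qquad \ell_P=\langle\nabla P,\cdot\rangle,\ \ \ell_R=\langle\nabla R,\cdot\rangle,\ \ \ell_M=\langle\nabla M,\cdot\rangle,
\]
all gradients at $p$. The singularity is non-Morse exactly when this binary quadratic form is degenerate, i.e. a perfect square or zero, so the whole problem reduces to computing the three gradients.

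For that I would work in the Monge frame adapted at $p$, using the coordinate second fundamental form $h^3_{ij}=\langle\Xi_{ij},e_3\rangle$, $h^4_{ij}=\langle\Xi_{ij},e_4\rangle$, so that $P=\hat P/W$ etc. with $\hat P=h^3_{xx}h^3_{yy}-(h^3_{xy})^2$ and so on. Because $W=1+O(2)$ (whence $\nabla(1/W)$ vanishes at $p$), and because $j^1\Phi(0)=0$ makes $\Xi_{xx},\Xi_{xy},\Xi_{yy}$ purely normal at $p$ while $\partial_xe_3,\partial_ye_3,\dots$ are tangential there, the first derivatives of $h^3,h^4$ at $p$ collapse to the plain third partials, e.g. $\nabla h^3_{xx}=(\varphi_{xxx},\varphi_{xxy})$, $\nabla h^3_{xy}=(\varphi_{xxy},\varphi_{xyy})$, $\nabla h^3_{yy}=(\varphi_{xyy},\varphi_{yyy})$ (Hankel pattern), and likewise for $h^4$ with $\psi$. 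Feeding this together with the proportional-square values above into $\nabla P,\nabla R,\nabla M$ makes them collapse to
\[
\nabla P=a_0\mathbf Y,\qquad \nabla R=e_0\mathbf Z,\qquad \nabla M=\tfrac12(e_0\mathbf Y+a_0\mathbf Z),
\]
for two vectors $\mathbf Y,\mathbf Z\in\Rq$-free of each other, built from the third jets of $\varphi,\psi$.

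Writing $\hat y=\langle\mathbf Y,\cdot\rangle$, $\hat z=\langle\mathbf Z,\cdot\rangle$, the cross term then cancels and
\[
j^2_p\Delta=a_0e_0\,\hat y\hat z-\tfrac14(e_0\hat y+a_0\hat z)^2=-\tfrac14(e_0\hat y-a_0\hat z)^2,
\]
a negative multiple of a perfect square; its Hessian has rank at most one, so the singularity is degenerate, hence non-Morse. The degenerate cases $a_0=0$ or $e_0=0$ (where some values vanish and $\nabla P$ or $\nabla R$ drops out) give $j^2_p\Delta=-\tfrac14\ell_M^{\,2}$ or $0$ and are handled identically. I expect the main obstacle to be the third step: justifying rigorously that the gradients of the invariants $P,R,M$ reduce at $p$ to plain third partials, that is, that every frame- and metric-correction term is of first order and drops out at $p$ by virtue of $j^1\Phi(0)=0$, $W=1+O(2)$ and $\hat F=O(2)$. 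Once that identification is secured, the collapse to a perfect square is a short linear-algebra computation.
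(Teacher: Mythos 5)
Your proof is correct, and it diverges from the paper's exactly at the step that carries the weight. The first half is the paper's argument in matrix dress: the paper likewise observes that $K(p)=0$ together with $\Delta(p)=0$ forces $K_1(p)=K_2(p)=0$ and $(ag+ce-2bf)(p)=0$ (its equation \eqref{cz}, which is your $\mathbf N(p)=0$), and then gets $\Delta_x(p)=\Delta_y(p)=0$ from the product rule. For the non-Morse conclusion, however, the paper computes nothing: it invokes the result of \cite{dcc,rdcc} that at a critical point $H_{\Delta}(p)=\xi K(p)$ with $\xi>0$, and finishes since $K(p)=0$. You instead prove the needed case of that lemma outright: $\mathbf N(p)=0$ makes $(a,b,c)$ and $(e,f,g)$ proportional to one common vector, the product rule collapses $\nabla P$, $\nabla R$, $\nabla M$ onto the span of two vectors $\mathbf Y,\mathbf Z$, and the $2$-jet $\ell_P\ell_R-\ell_M^{\,2}$ becomes $-\tfrac14(e_0\hat y-a_0\hat z)^2$, a quadratic form of rank at most one, hence a degenerate Hessian. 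Your route buys self-containedness (plus the extra information $j^2_p\Delta\le 0$); the paper's citation buys brevity and a stronger statement ($H_\Delta$ proportional to $K$ at every critical point, not just where $K=0$).

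Two remarks. First, the obstacle you flag as the main difficulty --- justifying $\nabla a=(\varphi_{xxx},\varphi_{xxy})$ and the rest of the Hankel pattern in the Monge frame --- is a non-issue, because your argument never uses it: the collapse $\nabla P=a_0\mathbf Y$, $\nabla R=e_0\mathbf Z$, $\nabla M=\tfrac12(e_0\mathbf Y+a_0\mathbf Z)$ follows from the product rule and the proportionality $(a,b,c)=a_0v$, $(e,f,g)=e_0v$ alone, upon setting $\mathbf Y:=v_3\nabla a-2v_2\nabla b+v_1\nabla c$ and $\mathbf Z:=v_3\nabla e-2v_2\nabla f+v_1\nabla g$, whatever the individual gradients happen to be. (The Hankel identification is in fact true, since in Monge form the frame and metric corrections to $a,\dots,g$ vanish to second order at the origin, but it is decoration here.) Second, your normalization $(1,\mu,\mu^2)$ silently excludes the case where the common linear form is proportional to $y$, i.e.\ $(a,b,c)\parallel(0,0,1)$; working with the un-normalized common vector $v=(\alpha^2,\alpha\beta,\beta^2)$ as above covers that case with no extra effort, so the gap is cosmetic.
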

\begin{proof}
The Gauss curvature of a surface in $\Rq$ is the sum of the Gaussian curvatures of its projections in $\mathbf R^3$ along two orthogonal normal directions, that we can take as being $e_3$ and $e_4$:
\beq
K=K_1+K_2, \quad K_1=ac-b^2, \quad K_2=eg-f^2
\eeq
and if $K(p)=0$ we have:
\[
K_1(p)=-K_2(p)
\]

The equation $\Delta=0$ at the same point gives:
\beq
K_1(p)=0,\quad K_2(p)=0, \quad (ag+ce-2bf)(p)=0\label{cz}
\eeq
and therefore $p$ is a singular point:
\[
\Delta_{x}(p)=0, \quad \Delta_{y}(p)=0
\]

The singularity is non Morse if the Hessian vanishes, $H_{\Delta}(p)=0$; it is proved \cite{dcc,rdcc} that there exists a positive real number $\xi$ such that, at a critical point,  $H_{\Delta}(p)=\xi K(p)$.
\end{proof}

\begin{prop}\label{gfip}
If a point in surface $S$ is a  flat inflection point then, for any projection on $\mathbf R^3$ along a normal direction, its image belongs to the parabolic curve of that projection, where the Gaussian curvature vanishes.
\end{prop}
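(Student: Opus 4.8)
The plan is to reduce the statement to a single pointwise computation with the second fundamental form. Fix $p\in S$ and write a general unit normal as $\nu=\cos\theta\,e_3+\sin\theta\,e_4$; its orthogonal complement $\nu^\perp$ is a copy of $\mathbf R^3$ that contains the tangent plane $T_pS=\mathrm{span}(e_1,e_2)$ together with the remaining unit normal $\mu=-\sin\theta\,e_3+\cos\theta\,e_4$. The first observation is that, since $\nu\perp T_pS$, the orthogonal projection $\pi$ onto $\nu^\perp$ restricts to the identity on $T_pS$; hence $\pi$ is an immersion at $p$, the pair $e_1,e_2$ stays an orthonormal tangent frame of the projected surface $\pi(S)$ at $\pi(p)$, and $\mu$ is its unit normal inside $\mathbf R^3=\nu^\perp$.

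Next I would identify the second fundamental form of $\pi(S)$ at $\pi(p)$. As $\pi$ is linear, $D^2(\pi\circ\Xi)=\pi(D^2\Xi)$, whose component normal to $\pi(S)$ is the $\mu$-component of $\sff=\sff_1 e_3+\sff_2 e_4$, that is $-\sin\theta\,\sff_1+\cos\theta\,\sff_2$. Its coefficients in the orthonormal frame $e_1,e_2$ are therefore
\[
\tilde a=-a\sin\theta+e\cos\theta,\quad \tilde b=-b\sin\theta+f\cos\theta,\quad \tilde c=-c\sin\theta+g\cos\theta,
\]
and, the frame being orthonormal, the Gaussian curvature of $\pi(S)$ at $\pi(p)$ equals $\tilde a\tilde c-\tilde b^2$, which a short expansion rewrites as
\[
K_1\sin^2\theta+K_2\cos^2\theta-(ag+ce-2bf)\,\sin\theta\cos\theta,
\]
with $K_1=ac-b^2$ and $K_2=eg-f^2$ the Gaussian curvatures of the two coordinate projections.

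Finally I would invoke the hypothesis. A flat inflection point is by definition a point with $\Delta(p)=0$ and $K(p)=0$, and these are exactly the conditions under which \eqref{cz} was established: $K_1(p)=K_2(p)=0$ and $(ag+ce-2bf)(p)=0$. Substituting into the displayed expression makes it vanish for every $\theta$, so the Gaussian curvature of each normal projection is zero at $\pi(p)$; equivalently $\pi(p)$ lies on the parabolic curve of that projection, as claimed.

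I expect the one delicate point to be the identification of the projected second fundamental form: one must check that projecting along a normal direction leaves the tangent frame at $p$ orthonormal, so that no positive metric factor intervenes and the determinant $\tilde a\tilde c-\tilde b^2$ is literally the Gaussian curvature, and that its normal component is exactly the $\mu$-component of $\sff$. After that, the trigonometric identity and the substitution of \eqref{cz} are immediate.
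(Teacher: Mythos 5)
Your proposal is correct and follows the same route as the paper: reduce the hypothesis via \eqref{cz} to $K_1(p)=K_2(p)=0$ and $(ag+ce-2bf)(p)=0$, then conclude that every normal projection has vanishing Gaussian curvature at the image point. The only difference is that you carry out explicitly the computation the paper dismisses as ``easy to see'' for a general direction $\cos\theta\,e_3+\sin\theta\,e_4$, and your expansion $K_1\sin^2\theta+K_2\cos^2\theta-(ag+ce-2bf)\sin\theta\cos\theta$ makes visible that the cross-term condition of \eqref{cz} is genuinely needed there (the conditions $K_1=K_2=0$ alone, which are all the paper's proof cites, would not suffice), so your write-up is in fact a more complete version of the paper's argument.
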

\begin{proof}
A point  is a flat inflection point if it belongs to the intersection of the discriminant curve $\Delta=0$ and the curve of zero Gaussian curvature $K=0$; we have already seen (\ref{cz}) that $K_1=K_2=0$ at any   point where $\Delta=0$ and $K=0$.

If $K_1=K_2=0$ the projection of the point belongs to the parabolic curve of the surface in $\mathbf R^3$  obtained by projections along the normal directions given by $e_3$ and $e_4$. It is easy to see that the same is true if the direction is given by a linear combination of $e_3$ and $e_4$.
 \end{proof}

Also, if the image belongs to the parabolic curve  for any projection on $\mathbf R^3$ along a normal direction, this is in particular true for the directions given by $e_3$ and $e_4$ and $K_1=K_2=0$.

Our interest in the inflection points in a Lagrangean surface is the study of  the Gauss map around them, therefore we have to make changes of coordinates that are simultaneously isometries and symplectomorphisms, as  they do not affect the metric properties nor the symplectic (or Lagrangean) properties.

\begin{lemma}\label{nfip}
Let $p\in S$ be a point in a Lagrangean  surface, given in normal form as the graph around the origin of a map $(x,y)\mapsto (\varphi(x,y), \psi(x,y))$. If $K_1(p)=0$ and  $K_2(p)=0$, then $p$ is an inflection point, and:
\[j^3\varphi=\dfrac{1}{2}Cx^2+\dfrac{1}{3}ax^3+bx^2y+cxy^2+\dfrac{1}{3}dy^3, \quad j^3\psi=\dfrac{1}{3}bx^3+cx^2y+dxy^2 +e \dfrac{1}{3}y^3
\]
\end{lemma}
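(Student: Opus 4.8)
The plan is to exploit the fact that the normal-form identity $\varphi_y\equiv\psi_x$ says precisely that the $1$-form $\varphi\,\de x+\psi\,\de y$ is closed, so locally $\varphi=H_x$ and $\psi=H_y$ for a single generating function $H:\Rd\lra\mathbf R$. Because $j^1\Phi(0)=0$, the quadratic part of $H$ vanishes, $j^2H(0)=0$, and every statement about the jets of $\varphi$ and $\psi$ becomes a statement about the Taylor coefficients of $H$: the cubic part of $H$ controls the second fundamental form at $p$, and its quartic part controls the $3$-jets to be normalised. Writing the cubic part of $H$ as $\tfrac16\bigl(a_0x^3+3a_1x^2y+3a_2xy^2+a_3y^3\bigr)$ and differentiating, one reads off the $\sff$-coefficients $a=a_0$, $b=e=a_1$, $c=f=a_2$, $g=a_3$, which is automatically consistent with \eqref{e=b}.

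First I would translate the hypothesis into a statement about this binary cubic. The inflection condition—condition (3) of the Proposition on isoclinic surfaces, which with $e=b$, $f=c$ asks that the matrix with rows $(a,b,c)$ and $(b,c,g)$ have rank $\le1$—is exactly the condition that $ax^3+3bx^2y+3cxy^2+gy^3$ be a perfect cube $\lambda(\alpha x+\beta y)^3$, its catalecticant being that very matrix. Its three $2\times2$ minors are $ac-b^2=K_1$, $ag-bc$, and $bg-c^2=K_2$, so $K_1(p)=K_2(p)=0$ supplies two of the three vanishing relations. The third, $ag=bc$, I would obtain from parabolicity: by \eqref{Delta}, on a Lagrangean surface $\Delta=K_1K_2-\tfrac14(ag-bc)^2$, so once $K_1=K_2=0$ the condition $\Delta(p)=0$ is equivalent to $ag=bc$. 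This is the content of the flat-inflection characterisation around Proposition~\ref{gfip} and \eqref{cz} (the point lies on the parabolic curve of every normal projection), and it is what forces the cubic to be a perfect cube and hence forces $p$ to be an inflection point.

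Once the cubic part of $H$ is a perfect cube I would invoke the normalisation freedom announced before the Lemma: rotating $(x,y)$ and $(u,v)$ by the same angle is an isometry and a symplectomorphism, so it preserves the normal form and the generating-function description, acting on $H$ simply by precomposition with a rotation. Choosing the rotation that carries the cube-root direction $(\alpha,\beta)$ to $(1,0)$ brings the cubic part of $H$ to $\tfrac{C}{6}x^3$; then $\psi=H_y$ has vanishing $2$-jet and $\varphi=H_x$ has $2$-jet $\tfrac{C}{2}x^2$, matching the quadratic terms of the claimed form. Writing the still-free quartic part of $H$ as $\tfrac1{12}\bigl(ax^4+4bx^3y+6cx^2y^2+4dxy^3+ey^4\bigr)$ and differentiating, $\varphi=H_x$ and $\psi=H_y$ reproduce exactly the stated cubic parts, the common coefficients $b,c,d$ in $j^3\varphi$ and $j^3\psi$ being the automatic signature of their coming from one $H$.

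The genuinely delicate step is the second one. The literal hypothesis $K_1(p)=K_2(p)=0$ yields only two of the three catalecticant relations: for instance $\varphi=\tfrac12x^2$, $\psi=\tfrac12y^2$ is Lagrangean with $K_1=K_2=0$ yet has $\Delta=-\tfrac14\neq0$ and is hyperbolic, so it is not an inflection point. Thus the conclusion ``$p$ is an inflection point'' must draw on the extra relation $ag=bc$, i.e. on parabolicity in \emph{every} normal direction, $\det(\cos\phi\,\He\varphi+\sin\phi\,\He\psi)\equiv0$, which is the reading of the hypothesis made precise around Proposition~\ref{gfip} and is what returns all three relations simultaneously. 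Securing this is the only conceptual obstacle; after it the argument is routine, the sole bookkeeping being the binomial normalisation of the quartic coefficients of $H$ so that the factors $\tfrac13$ and the shared $b,c,d$ emerge as written.
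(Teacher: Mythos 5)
Your route is genuinely different from the paper's, and where it is complete it is correct --- indeed more complete than the paper on one point. The paper does not use a generating function in this proof (it only introduces $F$ with $\varphi=F_x$, $\psi=F_y$ later, in the stability theorem): it works directly with the pair of degenerate quadratic forms, writing $j^2\varphi=\tfrac12A(a_1x+a_2y)^2$, $j^2\psi=\tfrac12B(b_1x+b_2y)^2$ (which is all that $K_1(p)=K_2(p)=0$ says), then uses $\kappa(p)=K(p)=0$ to obtain $AB(a\cdot b)(a\wedge b)=0$, hence the dichotomy $a=\pm b$ or $a\perp b$ when $AB\neq0$; the case $a=\pm b$ leads, via the same simultaneous orthogonal change of $(x,y)$ and $(u,v)$ that you use, to $j^2\varphi=\tfrac12Cx^2$, $j^2\psi=0$. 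The paper stops at the $2$-jet: the cubic terms and the shared coefficients $b,c,d$ are never derived, whereas your quartic-part-of-$H$ computation produces the stated $j^3\varphi$, $j^3\psi$ exactly, with the factors $\tfrac13$ and the $\varphi$--$\psi$ symmetry automatic. Your catalecticant reformulation of the inflection condition is a clean substitute for the paper's case analysis.

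The ``delicate step'' you flagged is not a defect of your attempt but a genuine flaw in the Lemma as stated, and your counterexample is precisely the case the paper throws away. For $\varphi=\tfrac12x^2$, $\psi=\tfrac12y^2$ one has $AB\neq0$ and $a\perp b$; the paper eliminates the perpendicular case by showing the Lagrangean identities $\varphi_{xy}=\psi_{xx}$, $\varphi_{yy}=\psi_{xy}$ make it impossible \emph{under the additional assumption} $\varphi_{xy}(0,0)\neq0$, a ``generic condition'' that appears nowhere in the statement and that your example violates (there the perpendicular case survives, with $a_2=0$). So the paper's repair is genericity, while yours is the extra relation $ag=bc$, equivalently $\Delta(p)=0$ by \eqref{Delta} together with \eqref{e=b}; the two are equivalent in effect, and either way the conclusion that $p$ is an inflection point cannot be drawn from the literal hypothesis $K_1(p)=K_2(p)=0$. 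One caution about your proposed source for the missing relation: Proposition~\ref{gfip} and the remark following it run in the other direction (flat inflection point $\Rightarrow$ parabolic image in every normal projection, and parabolic in every projection $\Rightarrow$ $K_1=K_2=0$); neither yields $K_1=K_2=0\Rightarrow ag=bc$, which your own example shows to be false. So the relation must simply be added as a hypothesis, or the Lemma restricted to generic points, as its proof and its later applications (which explicitly say ``in the generic case'') tacitly do.
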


\begin{proof}
At $H_{\varphi}(0,0) = H_{\psi}(0,0)=0$ we see that $S$ is the graph of $(x,y)\mapsto (\varphi(x,y), \psi(x,y))$, with:
\[
j^2\varphi=\dfrac{1}{2}A(a_1x+a_2y)^2, \qquad j^2\psi=\dfrac{1}{2}B(b_1x+b_2y)^2
\]
and $\|a\|=\|b\|=1$, where $a=(a_1,a_2)$, $b=(b_1,b_2)$.

From $K(0,0)=H_{\varphi}(0,0) + H_{\psi}(0,0)=0$ it follows that  $0=\kappa (0,0)=L(0,0)+N(0,0)$ and we also have:
\[
\left|
\begin{matrix}
\varphi_{xy}&\psi_{xy}\\
\varphi_{yy}&\psi_{yy}
\end{matrix}
\right|
+
\left|
\begin{matrix}
\varphi_{xx}&\psi_{xx}\\
\varphi_{xy}&\psi_{xy}
\end{matrix}
\right|=0
\]
at the origin. This is readily evaluated as:
\[
AB(a\cdot b)(a\wedge b)=0
\]

In the generic case $AB\neq 0$ and thus either $a=\pm b$ or $a\perp b$; if we assume $a\cdot b=0$, or $b_1=a_2$, $b_2=-a_1$, the conditions:
\[
\varphi_{xy}(0,0)=\psi_{xx}(0,0), \quad  \varphi_{yy}(0,0)=\psi_{xy}(0,0) 
\]
are equivalent to:
\[
Aa_1a_2=Ba_2^2, \quad Aa_2^2=-Ba_1a_2
\]
These are impossible if  we assume the generic condition $\varphi_{xy}(0,0)\neq 0$; under this condition we have $a\wedge b$, or $a=\pm b$ and also:
\[
B=A\dfrac{a_2}{a_1}
\]
so that:
\[
j^2\varphi=\dfrac{1}{2}A(a_1x+a_2y)^2, \qquad j^2\psi=\dfrac{1}{2}A\dfrac{a_2}{a_1}(a_1x+a_2y)^2
\]

The change of coordinates:
\begin{align*}
\xi=a_1x+a_2y \quad  \eta=&a_2x-a_1y\\
\zeta=a_1u+a_2v \quad w=&a_2u-a_1v
\end{align*}
is a symplectomorphism and an isometry, so it belongs to $SU(2)$ and takes isoclinic planes into isoclinic planes. In these coordinates (but writing with the old variables) we get:
\[
j^2\varphi=\dfrac{1}{2}Cx^2, \quad j^2\psi=0, \qquad C=A\left(a_1+\dfrac{a_2^2}{a_1}\right)
\]
\end{proof}

\begin{rem}
If the surface is isoclinic,  it is still true that:
\[
j^2\varphi=\dfrac{1}{2}Cx^2, \quad j^2\psi=0, \qquad C=A\left(a_1+\dfrac{a_2^2}{a_1}\right)
\]
In this case, the situation $a\cdot b=0$ is impossible, because at an inflection point there exists a linear combination of $\varphi$ and $\psi$ with vanishing zero 2-jet \cite{dcc}.
\end{rem}

\begin{theorem}
The existence of a flat inflection point in a generic Lagrangean surface is a stable situation  inside the class of Lagrangean surfaces.
\end{theorem}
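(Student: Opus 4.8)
The plan is to exploit the generating–function description of Lagrangean surfaces, which turns a perturbation \emph{inside} the Lagrangean class into a perturbation of a single scalar function, and then to recognise the flat inflection points as a transversal, hence stable, intersection of complementary dimension. Concretely, the condition $\varphi_y\equiv\psi_x$ of the local normal form is exactly the integrability condition for $\Phi=(\varphi,\psi)$ to be a gradient, so locally $\mathcal L$ is the gradient graph $(x,y)\mapsto(x,y,F_x,F_y)$ of a generating function $F:\Rd\to\mathbf R$, and every nearby Lagrangean surface is the gradient graph of a nearby $F$. Choosing a smooth adapted frame I would form the second fundamental form coefficient map $p\mapsto(a,b,c,e,f,g)(p)$; on a Lagrangean surface the relations $e\equiv b$, $f\equiv c$ of \eqref{e=b} reduce this to a map $\Theta:S\to\mathbf R^4$, $p\mapsto(a,b,c,g)(p)$, whose entries are, up to the lower-order corrections produced by the passage to Monge form, the third derivatives $F_{xxx},F_{xxy},F_{xyy},F_{yyy}$ at $p$; in particular $\Theta$ factors through the $3$-jet $j^3F$.

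By the Proposition characterising inflection points, on an isoclinic --- in particular Lagrangean --- surface a point is a flat inflection point if and only if $\rank\left[\begin{smallmatrix}a&b&c\\b&c&g\end{smallmatrix}\right]\le1$. Thus the flat inflection points of $\mathcal L$ are exactly $\Theta^{-1}(\Sigma)$, where
\[
\Sigma=\left\{(a,b,c,g)\in\mathbf R^4 \ \middle|\ ac-b^2=ag-bc=bg-c^2=0\right\}
\]
is the cone over the twisted cubic, a two–dimensional variety of codimension two in $\mathbf R^4$, smooth away from its vertex $0$. The vertex corresponds to a totally degenerate (planar) point; the genericity hypothesis, together with the normalisation of Lemma~\ref{nfip} --- where the surviving coefficient $C\neq0$ makes $\sff$ of rank exactly one --- guarantees that the flat inflection points of a generic Lagrangean surface land on the smooth stratum $\Sigma\setminus\{0\}$.

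With this in hand the statement becomes the persistence of a transversal intersection. The dimensions match, $\dim S+\dim\Sigma=2+2=4=\dim\mathbf R^4$, so $\Theta\pitchfork\Sigma$ consists of isolated, nondegenerate points. I would invoke Thom's jet–transversality theorem adapted to the class: for a residual set of generating functions $F$, the jet extension $j^3F$ is transversal to the submanifold $\widetilde\Sigma$ obtained by lifting $\Sigma\setminus\{0\}$ through the submersion expressing the coefficients in terms of jets, which is equivalent to $\Theta\pitchfork(\Sigma\setminus\{0\})$. A transversal intersection of complementary dimension is stable: if $F$ is perturbed $C^\infty$-small to $F'$, then $j^3F'$ stays transversal near each such point and, by the implicit function theorem, $\Theta'^{-1}(\Sigma)$ still contains a point near the original one. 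Since perturbing $F$ is precisely perturbing within the Lagrangean class, the existence of a flat inflection point is stable there.

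The main obstacle I anticipate is the transversality step under the Lagrangean constraint: because we may only vary the single function $F$, I must check that varying $F$ moves the four coefficients $(a,b,c,g)$, together with their first tangential derivatives, freely enough to make $\Theta$ transversal to $\Sigma\setminus\{0\}$ --- that is, that the relations $e\equiv b$, $f\equiv c$ do not secretly force $\Theta^{-1}(\Sigma)$ to be degenerate. This reduces to verifying that the map from jets of $\Phi$ to the coefficient data is a submersion onto the reduced $\mathbf R^4$ after imposing $e=b$, $f=c$ (which follows by adding localised cubic corrections to $F$), so that Thom transversality applies within the restricted class; the cone singularity at $0$ is dealt with separately by the genericity hypothesis, which keeps the flat inflections on the smooth stratum where transversality is meaningful. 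The contrast with arbitrary surfaces in $\Rq$, where a flat inflection forces the non-Morse degeneracy of the discriminant and is therefore unstable, is exactly accounted for by the identity $|K|\equiv|\kappa|$, which collapses the two independent conditions $\Delta=0$, $K=0$ into the single rank condition cut out by $\Sigma$.
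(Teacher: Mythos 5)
Your proposal is correct, and its skeleton coincides with the paper's: reduce perturbations inside the Lagrangean class to perturbations of a single generating function $F$ (via $\varphi=F_x$, $\psi=F_y$), express the flat-inflection condition as a codimension-two algebraic condition on $j^3F$, invoke Thom jet transversality to get a residual set of $F$ meeting that condition transversally, and conclude persistence of the isolated transversal intersection points under small perturbations of $F$. Where you genuinely diverge is in the choice of the variety to which transversality is required. The paper works with the two hypersurfaces $K_1=0$ and $K_2=0$ (with $K_1=ac-b^2$, $K_2=eg-f^2$) and asserts that, generically, they pull back to smooth curves on the surface crossing transversally at the flat inflection points; you instead use the determinantal variety $\Sigma=\{ac-b^2=ag-bc=bg-c^2=0\}$, the cone over the twisted cubic, i.e.\ the locus $\rank\left[\begin{smallmatrix} a&b&c\\ b&c&g\end{smallmatrix}\right]\le 1$. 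Your formulation is in fact the more careful one, for two reasons. First, on a Lagrangean surface ($e=b$, $f=c$) one has $\Delta=K_1K_2-\frac{1}{4}(ag-bc)^2$, so the locus $\{K_1=K_2=0\}$ is the union of $\Sigma$ and the plane $\{b=c=0\}$, and on the extra component (where $ag\neq bc$) one gets $\Delta<0$: these are hyperbolic points with $K=\kappa=0$ that are \emph{not} inflection points, so the paper's locus is strictly larger than the flat-inflection locus. Second, in the adapted coordinates of Lemma~\ref{nfip} a flat inflection point has $(a,b,c,g)=(C,0,0,0)$, which is a \emph{singular} point of the hypersurface $\{bg-c^2=0\}$ (its gradient $(0,g,-2c,b)$ vanishes there), so the "two smooth curves meeting transversally" picture only holds after choosing the Monge coordinates generically, so that the jet lands off the axes $\{b=c=g=0\}$ and $\{a=b=c=0\}$ --- a point the paper glosses over and which your stratified formulation (transversality to $\Sigma\setminus\{0\}$, deeper strata avoided by dimension) handles automatically. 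What the paper's version buys is brevity and a vivid geometric picture of flat inflections as crossings of the curves $K_1=0$ and $K_2=0$; what yours buys is that the variety you intersect is exactly the flat-inflection locus, so persistence of the intersection point immediately yields persistence of a flat inflection point, with no need to rule out the spurious hyperbolic component after perturbation.
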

\begin{proof}
All inflection points $p$ of a Lagrangean surface $\mathcal L$ are flat inflection points, and we have $\Delta(p)=0 $, $K(p)=0$, and even $K_1(p)=K_2(p)=0$.

Assume now that  $K_1(p)=K_2(p)=0$; then using the  normal form around $p$  of lemma~\ref{nfip}, we get  $\Delta=0$ and $K=K_1+K_2=0$ at $p$. This shows that $p$ is a flat inflection point. 

Locally, a Lagrangean surface $\mathcal L$ is given by the graph around the origin of a map $(x,y)\mapsto (\varphi(x,y), \psi(x,y))$, for which $\varphi_y\equiv \psi_x$. So we can identify $\mathcal L$ with a map $F$ such that:
\[
\varphi=F_x,\quad \psi=F_y
\]

The conditions $K_1=0$, or $K_2=0$, are represented in the 3-jet space $J^3(\Rd,\mathbf R)$ as closed algebraic sets of codimension one, and $K_1=K_2=0$ as a closed algebraic set of codimension two. For a residual set of maps $F$, $j^3F$ intersects transversally those stratified sets, therefore $K_1=0$ and $K_2=0$ are smooth curves intersecting transversally at isolated points corresponding to $K_1=K_2=0$.

Assume that for a given $F$, or equivalently, for a given Lagrangean surface $\mathcal L$ that intersection is non empty: the curves $K_1=0$ and $K_2=0$ are smooth curves intersecting transversally at $p$, or $j^3F$ intersects transversally the algebraic sets representing $K_1=0$, $K_2=0$ and $K_1=K_2=0$; then near $p$ so does the 3-jet of any small perturbation of $F$.
\end{proof}

\subsection{Singularities of the Gauss map}

A generic  map from $\Rd$ into $\Rq$  has no stable singularities. This is true also fo the Gauss map of a generic surface in $\Rq$, since in general the intersection of $\Delta=0$ with $K(p)=0$ is empty \cite{little}.

The situation is quite different for the Gauss map of Lagrangean and isoclinic surfaces: then the
Gauss map  is locally a map from $\Rd$ into $\Rt$ and it can present cross-cap singularities in a stable way \cite{mond}.

\begin{prop}
Let $S$ be an isoclinic surface. Then:
\begin{itemize}
\item the singularities of its Gauss map are inflection points;
\item for a generic isoclinic or Lagrangean surface, the inflection points are cross-cap singularities of the Gauss map.
\end{itemize}
\end{prop}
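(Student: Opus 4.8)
The first assertion requires essentially no new work. The earlier proposition that lists seven equivalent conditions at a point $p$ of an isoclinic surface includes, as condition~(1), that $p$ be an inflection point and, as condition~(7), that the Gauss map $\Gamma$ fail to be an immersion at $p$; these are equivalent. Since the singular set of $\Gamma$ is by definition the locus where $\Gamma$ is not an immersion, the plan is simply to cite this equivalence: the singularities of the Gauss map of an isoclinic surface are exactly its inflection points. Because Lagrangean surfaces are isoclinic (by the corollary asserting $|K|\equiv|\kappa|\Rightarrow$ isoclinic), the statement covers that case too.

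For the second assertion I would first reduce the target to three dimensions. On an isoclinic surface with $K=\kappa$, the remark following Blaschke's theorem gives $\det\mathrm D\Gamma_2=K-\kappa\equiv 0$, so $\Gamma_2$ has rank at most one everywhere; in the Lagrangean normal form this is transparent, since the third Klein coordinate of $\Gamma_2$ is $-\varphi_y+\psi_x$, which vanishes identically, placing the image of $\Gamma_2$ in the great circle $b_3=0$. In either case the image of $\Gamma_2$ lies in a curve $\gamma\subset\mathbf S^2$ — near $p$ it is a submersion onto $\gamma$, since $\partial_x b_2(0)=-C\neq 0$ in the normal form — so $\Gamma$ takes values in the $3$-manifold $\mathbf S^2\times\gamma\subset G_{2,4}$. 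Taking $(a_2,a_3,b_2)$ as coordinates there, the remaining Klein coordinates being determined, I would turn $\Gamma$ locally into a map germ $\tilde\Gamma:(\Rd,0)\to(\Rt,0)$, whose singularity type is that of $\Gamma$.

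Next I would feed in the $3$-jet normal form of Lemma~\ref{nfip} at the inflection point and compute the $2$-jet of $\tilde\Gamma$; the cubic terms of $\Phi$ are exactly what is needed, as $\Gamma$ depends on the first derivatives of $\Phi$. The factor $1/\sqrt W=1+O(2)$ does not disturb the quadratic parts, which come directly from the quadratic parts of $-\varphi_x+\psi_y$, $-\varphi_y-\psi_x$ and $-\varphi_x-\psi_y$. One finds $\mathrm D\tilde\Gamma(0)$ of rank one, with image a fixed line (provided $C\neq 0$) and kernel the $y$-axis. Passing to target coordinates adapted to that rank-one image and applying Whitney's criterion — a rank-one germ written as $(x,g,h)$ with $g,h=O(2)$ and $\partial_y$ spanning the kernel is a cross-cap precisely when $g_{yy}h_{xy}-g_{xy}h_{yy}\neq 0$ at the origin — the test collapses to a $2\times2$ determinant in the cubic coefficients of the normal form, which I expect to come out as $d^2-ce\neq 0$ in the notation of Lemma~\ref{nfip}.

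Finally, the condition $d^2\neq ce$ is open and dense, so it should hold at every inflection point of a generic isoclinic or Lagrangean surface by the same jet-transversality argument used for the stability of flat inflection points: the bad locus $d^2=ce$ is a proper algebraic subset of the relevant jet space, and for a residual family the $3$-jet avoids it along the (isolated) inflection points, giving the cross-cap conclusion. The main obstacle I anticipate is the reduction step together with its bookkeeping — justifying cleanly that for a general isoclinic, not merely Lagrangean, surface the image of $\Gamma_2$ is a genuine smooth curve near $p$, so that $\tilde\Gamma$ is honestly $\Rt$-valued, and then carrying the $1/\sqrt W$ factors and the adapted coordinate change through the $2$-jet computation without corrupting the determinant whose nonvanishing is the cross-cap condition.
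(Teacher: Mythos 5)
Your proposal is correct and follows the same skeleton as the paper: the first item is handled exactly as you do, by citing the equivalence of conditions (1) and (7) in the earlier seven-way proposition, and the second item likewise rests on the normal form of Lemma~\ref{nfip} (or the remark following it) together with the observation that the Gauss map of an isoclinic surface is effectively a germ $(\Rd,0)\to(\Rt,0)$. The two treatments diverge in how that three-dimensional reduction is made and in how the cross-cap is certified. The paper works in the affine chart of $G_{2,4}$ in which nearby planes are graphs $z=\alpha_1\xi+\beta_1\eta$, $w=\alpha_2\xi+\beta_2\eta$, so that $\Gamma$ becomes $(x,y)\mapsto(\varphi_x,\psi_x,\varphi_y,\psi_y)$ and the Lagrangean identity $\varphi_y\equiv\psi_x$ collapses the target to $\Rt$; the singularity is then read off from the $2\times4$ matrix of second derivatives, which the normal form reduces to one with single nonzero entry $C$. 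Your reduction via Klein coordinates and the degeneracy $\det\mathrm D\Gamma_2=K-\kappa\equiv0$, placing the image in $\mathbf S^2\times\gamma$, is heavier (as you note, the smoothness of $\gamma$ near $p$ itself needs $C\neq0$) but equivalent. The genuine added value of your version is the final step: the paper asserts that for a generic surface $C\neq0$, ``so the singularity is a cross-cap'', yet $C\neq0$ is only the condition that the rank be one rather than zero, and a rank-one germ need not be a cross-cap; the paper's genericity hypothesis is silently doing more work. Your Whitney-criterion computation supplies the missing open condition explicitly, and your expected answer is right: with kernel $\partial_y$, one finds
\[
\det\left[\partial_x\Gamma,\ \partial_{yy}\Gamma,\ \partial_{xy}\Gamma\right]=\pm\,4C\,(d^2-ce)
\]
in the notation of Lemma~\ref{nfip}, so the cross-cap condition is $C(d^2-ce)\neq0$, and $d^2\neq ce$ is then imposed generically by the same jet-transversality argument the paper uses for stability of flat inflection points. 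One caveat, which you flag and which the paper shares: this explicit criterion uses the Lagrangean $3$-jet normal form, whereas for a merely isoclinic surface the remark after Lemma~\ref{nfip} controls only the $2$-jet, so there the cross-cap condition is a different (but still generic) inequality in the cubic coefficients.
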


\begin{proof}
We assume $S$ in Monge form:
\[(x,y)\mapsto (x,y,\varphi(x,y),\psi(x,y))
\]
with the point of interest being the image of the origin, and the tangent plane there being the $(x,y)$-plane, $z=0, w=0$.

We have already seen that the singularities of the Gauss map are flat  inflection points of the surface.
Assume now that $H_{\varphi}(0,0) = H_{\psi}(0,0)=0$.

A parametrization of the Grassmannian $G_{2,4}$ is given by $z=\alpha_1\xi+\beta_1\eta$ and $w=\alpha_2\xi+\beta_2(\eta$, and the Gauss map can be written as:
\[
\alpha(x,y)=(\varphi_{x}(x,y),\psi_{x}(x,y)), \quad \beta(x,y)=(\varphi_{y}(x,y),\psi_{y}(x,y))
\]
The Gauss map is singular when the matrix:
\[
\left[
\begin{matrix}
\varphi_{xx}&\psi_{xx}&\varphi_{xy}&\psi_{xy}\\
\varphi_{yx}&\psi_{yx}&\varphi_{yy}&\psi_{yy}
\end{matrix}
\right]
\]
has rank one (or zero) at the origin. In the generic case we can use the normal form of lemma~\ref{nfip},  or the subsequent remark, and then the matrix above becomes:
\[
\left[
\begin{matrix}
C&0&0&0\\
0&0&0&0
\end{matrix}
\right]
\]
The Gauss map fails to be an immersion, and therefore it has a singularity. For a generic Lagrangean or isoclinic surface $C\ne 0$, so the singularity is a cross-cap.
\end{proof}

\section{The Gauss map for Lagrangean surfaces}

\begin{theorem}
A necessary  and sufficient condition for a surface $S\subset \Rq$ with Gauss map $\Gamma : S\lra G_{2,4}\subset \mathbf S^5$ to be congruent to a Lagrangean surface is that the image of $\Gamma_1$, or  $\Gamma_2$, be contained in a great circle.
\end{theorem}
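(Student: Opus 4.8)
The plan is to reduce the statement to the explicit formula for the Gauss map together with two structural facts: that being Lagrangean for a fixed symplectic form is, in Klein coordinates, a single homogeneous linear equation cutting out a great circle on one of the two spheres; and that the isometry group $O(4)$ acts on $G_{2,4}\cong \mathbf S^2\times \mathbf S^2$ factor by factor, carrying great circles to great circles. Everything then follows by straightening the given great circle into a standard one by an ambient congruence.

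First I would record the local computation. Writing $S$ in Monge form $(x,y)\mapsto (x,y,\varphi,\psi)$, the formula for $\Gamma_2$ shows that its third sphere coordinate is $(\psi_x-\varphi_y)/\sqrt{W}$; equivalently, in Klein coordinates $b_2=p_{13}-p_{42}=\varphi_y-\psi_x$. Since the standard symplectic form $\omega=\de x\wedge \de u+\de y\wedge \de v$ satisfies $\omega(T_1,T_2)=\varphi_y-\psi_x=b_2$, the tangent plane at a point is $\omega$-Lagrangean exactly when $b_2=0$ there. Thus $S$ is $\omega$-Lagrangean if and only if the image of $\Gamma_2$ lies in the great circle $\{b_2=0\}$ cut out on the $b$-sphere by the plane orthogonal to $\omega$; note that a homogeneous linear condition on $b$ is precisely a Lagrangean condition, whereas an inhomogeneous one (a small circle) is not, so that \emph{great} circle is sharp.

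Second I would invoke the action of $O(4)$ on $G_{2,4}$. Under the identification $G_{2,4}\cong \mathbf S^2\times \mathbf S^2$ arising from the splitting of $\Lambda^2\Rq$ into self-dual and anti-self-dual parts (the $a$- and $b$-spheres), an orientation-preserving isometry acts as a pair of independent rotations $(R_a,R_b)\in SO(3)\times SO(3)$, and the map $SO(4)\lra SO(3)\times SO(3)$ is onto; an orientation-reversing isometry interchanges the two sphere factors up to rotations. For \emph{necessity}: if $S$ is congruent to an $\omega$-Lagrangean surface $\mathcal L$, then the image of $\Gamma_2^{\mathcal L}$ lies in a great circle, and applying the congruence carries this into a great circle for $\Gamma_2^{S}$ when it preserves orientation, or for $\Gamma_1^S$ when it reverses orientation and swaps the factors; this is exactly the disjunction in the statement. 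For \emph{sufficiency}, suppose the image of $\Gamma_2$ lies in a great circle $\{n\cdot b=0\}\cap \mathbf S^2$ with $\md{n}=1$. Then at every point the tangent bivector is orthogonal to the nonzero anti-self-dual two-form $\omega_n$ determined by $n$; since $\omega_n\wedge\omega_n$ is a nonzero multiple of the volume form, $\omega_n$ is symplectic and $S$ is $\omega_n$-Lagrangean. As $SO(4)$ acts transitively on the unit sphere of anti-self-dual forms, there is $A\in SO(4)$ with $A^*\omega=\omega_n$, so $A(S)$ is $\omega$-Lagrangean and $S$ is congruent to a Lagrangean surface. If instead the hypothesis is on $\Gamma_1$, the same argument yields a self-dual symplectic form, and composing with the orientation-reversing isometry $\mathrm{diag}(1,1,1,-1)$, which conjugates the two standard forms, again exhibits $S$ as congruent to an $\omega$-Lagrangean surface.

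The main obstacle I expect is the bookkeeping around the $O(4)$-action on $\mathbf S^2\times \mathbf S^2$: one must verify that orientation-preserving isometries act by independent rotations of the two spheres, so that straightening the given circle to $\{b_2=0\}$ is realised by an ambient congruence, and that orientation-reversing ones exchange the factors, which is precisely what forces the disjunction \emph{$\Gamma_1$ or $\Gamma_2$} rather than a single component. Everything else rests on the single identity $b_2=\varphi_y-\psi_x$ and the observation that nonzero anti-self-dual forms on $\Rq$ are symplectic and form a single $SO(4)$-orbit.
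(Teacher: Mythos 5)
Your proposal is correct, and its skeleton matches the paper's: both identify the Lagrangean condition for the standard form $\omega=\de x\wedge\de u+\de y\wedge\de v$ with the single Klein equation $b_2=0$ (via $\omega(T_1,T_2)=\varphi_y-\psi_x\propto b_2$), and both then straighten an arbitrary great circle to this standard one by an ambient rotation. Where you genuinely diverge is in how that straightening rotation is produced. The paper proves three lemmas extending the $SO(4)$-action on $\Rq$ to a linear \emph{orthogonal} action on $\mathbf R^6$ in Pl\"ucker coordinates (columns $\mathfrak A^{ij}=A^i\wedge A^j$), and then exhibits an explicit matrix $\hat A\in SO(4)$ with $\hat{\mathfrak A}(0,1,0,0,-1,0)=(\alpha,\alpha)$, so that $A=\hat A^{-1}$ carries the hyperplane containing the image of $\Gamma$ onto $\{p_{13}-p_{42}=0\}$. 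You instead identify unit normals $n$ of great circles on the $b$-sphere with unit anti-self-dual $2$-forms $\omega_n$, observe that every nonzero such form is symplectic, so the hypothesis literally says $S$ is $\omega_n$-Lagrangean for a constant form, and then invoke transitivity of $SO(4)$ on unit anti-self-dual forms (surjectivity of $SO(4)\lra SO(3)\times SO(3)$) to replace $\omega_n$ by $\omega$. The paper's route buys a concrete, self-contained computation (the congruence is written down, no appeal to the self-dual/anti-self-dual representation theory); your route buys conceptual clarity on two points the paper treats tersely: the necessity direction for surfaces merely \emph{congruent} to Lagrangean ones (congruences act factor-by-factor on $\mathbf S^2\times\mathbf S^2$, preserving great circles), and the role of orientation, since the $\Gamma_1$ case corresponds to a self-dual symplectic form and genuinely requires an orientation-reversing isometry such as ${\rm diag}(1,1,1,-1)$ -- the counterpart of the paper's map $C$ and of its one-line remark that ``the other case ($a_2=0$) corresponds to the opposite orientation.'' Do note that your argument, like the paper's, ultimately rests on the unproved (but classical) structural fact about the $O(4)$-action on the two sphere factors; you flag it as the main bookkeeping obstacle, while the paper's explicit $\hat{\mathfrak A}$ effectively substitutes for it.
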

\begin{proof}
Assume the immersion:
\[
\Xi : U\subset\mathbf R^2\longrightarrow \Rq
\]
defines a Lagrangean surface of $\Rq$ with symplectic form $\omega=\de x\wedge \de u+\de y\wedge \de v$. Then $\Xi^*\omega \equiv 0$ or equivalently:
\[
p_{13}+p_{24} \equiv 0
\]
Recalling the definition of $\Gamma$, we see that its image is contained in $\mathbf S^5\cap \{p_{13}-p_{42}=0\}$, a great circle. In sphere coordinates this is equivalent to $b_2=0$, thus he image of $\Gamma_2$ is contained in a great circle. The other case $a_2=0$) corresponds to the opposite orientation.

Assume instead that the immersion $\Xi$ has a Gauss map such that the image of $\Gamma_2$ is contained in a great circle. This means there exists  a vector $\alpha\in\mathbf S^2\subset\mathbf R^3$ such that $\alpha\cdot b=0$ defines the plane passing through the origin containing the image of $\Gamma_2$. We have to prove that there exists a rigid motion in $\Rq$, taking $S$ into $\hat S$, such that the image of $\hat \Gamma_2$ is contained in $b_2=0$.

It is clear that translations in $\Rq$ do not affect the Gauss map. The linear action of $ {\rm SO}(4)$ on $\Rq$ induces an action on $G_{2,4}$, that can be extended to $\mathbf R^6$ as a linear action:

\begin{lemma}
If $A\in {\rm SO}(4)$ there exists a linear operator $\mathfrak A$ on $\mathbf R^6$ such that if $P=v_1\wedge v_2$ then:
\[
Av_1\wedge Av_2=\mathfrak A P
\]
\end{lemma}
\begin{proof}
Let $A^i$ be the $i$th-column of $A$ in the standard basis, identified with the vector $\sum A^i_k e_k$, and let $\mathfrak A^{ij}=A^i\wedge A^j$ be a column of $\mathfrak A$ (again the vector $\sum A^{ij}_{kl} e_k\wedge e_l$ is the exterior product of the vectors represented by $A^i$ and $A^j$):
\[
A=[A^1:A^2:A^3:A^4],\quad \mathfrak A=[\mathfrak A^{12}:\mathfrak A^{13}:\mathfrak A^{14}:\mathfrak A^{34}:\mathfrak A^{42}:\mathfrak A^{23}]
\]
A simple computation shows that $\mathfrak A$ is the desired linear operator on $\mathbf R^6$; a similar construction can be made based on the lines of the two matrices.
\end{proof}

\begin{lemma}
The action of ${\rm SO}(4)$  on $\mathbf R^6$ is orthogonal:
\[
(A,P)\mapsto  \mathfrak A P, \quad \mathfrak A\in {\rm O}(6)
\]
\end{lemma}
\begin{proof}
The action of  ${\rm SO}(4)$  on $\mathbf R^6$ is defined by:
$
(A,P)\mapsto  \mathfrak A P
$.
This is indeed a well defined action since:
\[
C=AB \Longrightarrow\mathfrak C= \mathfrak A \mathfrak B
\]
where $A,B,C\in {\rm SO}(4)$ and $\mathfrak A$, $\mathfrak B$, $\mathfrak C$ are the corresponding linear operators on $\mathbf R^6$.
Then $\mathfrak A \mathfrak A^T=I$ follows from $AA^T=I$.
\end{proof}

\begin{lemma}\label{l3}
$\mathbf S^5$ and $G_{2,4}$ are invariant for the action induced by ${\rm SO}(4)$:
\[
 \mathfrak A\mathbf S^5\subset\mathbf S^5, \quad \mathfrak A\mathbf G_{2,4}\subset\mathbf G_{2,4}, \quad \hbox{for all }A\in {\rm SO}(4)
\]
Moreover the action is transitive on $G_{2,4}$.
\end{lemma}

\begin{proof}
The invariance of $\mathbf S^5$ follows from the action being orthogonal, and then the invariance of $G_{2,4}$ is a consequence of the action of ${\rm SO}(4)$ on $\Rq$ take planes into planes.

Let $P_1$ and $P_2$  be any two points in $G_{2,4}$, and $\{u_i\}$, $\{v_i\}$, $i=1,\ldots,4$ be orthonormal bases of $\Rq$ with the same orientation such that $u_1,u_2\in P_1$ and $v_1,v_2\in P_2$; there exists $A\in {\rm SO}(4)$ taking one basis into the other, therefore taking the plane $P_1$ into the plane $P_2$ (note they do not need to have the same orientation). Then $\mathfrak A P_1=P_2$.
\end{proof}

Now  $\alpha\cdot b=0$ defines the plane passing through the origin containing the image of $\Gamma_2$; this means that the image of $(\Gamma_1,\Gamma_2)$ is contained in the hyperplane $(0,\alpha)\cdot (a,b)=0$, or that the image of $\Gamma$ is contained in the hyperplane $(\alpha,\alpha)\cdot P=0$.

If there exists $A\in {\rm SO}(4)$ such that:
\[
\mathfrak A (\alpha,\alpha)=\beta, \quad \beta=(0,1,0,0,-1,0)
\]
then, for all  $P$ in the plane  $(\alpha,\alpha)\cdot P=0$:
\[
\beta\cdot (\mathfrak A P)=(\mathfrak A (\alpha,\alpha))\cdot (\mathfrak A P)=(\alpha,\alpha)\cdot P=0
\]
Therefore $\mathfrak A$ takes the plane $(\alpha,\alpha)\cdot P=0$ into the plane $p_{13}-p_{42}=p_{13}+p_{24}=0$, and the surface $A\Xi$ is Lagrangean.

Consider $\hat A\in {\rm SO}(4)$ defined by:
\[
\hat A=
\left[
\begin{array}{cccc}
1&0&0&0\\[4pt]
0&\dfrac{\alpha_2}{\sqrt{\alpha_1^2+\alpha_2^2}}&\alpha_1&\dfrac{\alpha_1\alpha_3}{\sqrt{\alpha_1^2+\alpha_2^2}}\\[16pt]
0&-\dfrac{\alpha_1}{\sqrt{\alpha_1^2+\alpha_2^2}}&\alpha_2&\dfrac{\alpha_2\alpha_3}{\sqrt{\alpha_1^2+\alpha_2^2}}\\[16pt]
0&0&\alpha_3&-\dfrac{(\alpha_1^2+\alpha_2^2)}{\sqrt{\alpha_1^2+\alpha_2^2}}
\end{array}
\right]
\]
Then:
\[
\hat{\mathfrak A}(0,1,0,0,-1,0)=(\alpha,\alpha)
\]
and $A=\hat{A}^{-1}$ gives the required rigid motion.
\end{proof}

\begin{rem}
These lemmas can be extended to arbitrary $n$ and $k$, the action of ${\rm SO}(n)$ on $\mathbf R^n$ inducing an orthogonal action on the Euclidean space $\mathbf R^N$ containing $G_{k,n}$ with the same properties.
\end{rem}

\begin{ex}\label{notsuf0}
Consider the surface $S$ given by:
\[
\varphi(x,y)=x^2-y^2,\quad \psi(x,y)=ax+by-2xy
\]
A straightforward computation shows that:
\[
H_{\varphi}=H_{\psi}=L=N=-4,\quad M=Q=0
\]
therefore:
\[
K=\kappa\Longleftrightarrow E+G=\hat E+\hat G
\]
This is easily verified, as:
\begin{align*}
E=&1+4x^2+(a-2y)^2,& \hat E=&1+4x^2+4y^2\\
G=&1+4y^2+(b-2x)^2,& \hat G=&1+(a-2y)^2+(b-2x)^2
\end{align*}

The surface has Gauss curvature and normal curvature that coincide,  but   if $a$ is not zero we have $\psi_x\not\equiv \varphi_y$, and  if $b$ is not zero we have $\psi_y\not\equiv -\varphi_x$.

On the other hand, the image of $\Gamma_2$ is the curve:
\[
\dfrac{1}{\sqrt{W}}(\sqrt{W-a^2-b^2},b,-a)
\]
which is contained in the great circle $\mathbf S^2\cap \{ab_2+b b_3=0\}$; the image of $\Gamma$ is  contained in the plane:
\[
\alpha\cdot P=0, \quad \alpha=\dfrac{1}{\sqrt{2(a^2+b^2)}}(0,-a,b,0,a,-b)
\]
and is therefore contained in a great circle.

Then there exists a  linear map $A\in {\rm SO}(4)$ in $\Rq$ (lemma~\ref{l3}) that takes the above surface to a Lagrangean surface, in fact:
\[
A=
\left[
\begin{array}{rccl}
1&0&0&0\\
0&1&0&0\\
0&0&-b/\sqrt{a^2+b^2}&-a/\sqrt{a^2+b^2}\\
0&0&a/\sqrt{a^2+b^2}&-b/\sqrt{a^2+b^2}
\end{array}
\right]
\]

The image $\hat S$ of $S$ by $A$ is again parametrized in the form:
\[
(x,y)\mapsto (x,y, \hat\varphi(x,y), \hat\psi(x,y))
\] 
with
\begin{align*}
\hat\varphi(x,y)=&\dfrac{1}{\sqrt{a^2+b^2}}\left(a(2xy-ax-by)-b(x^2-y^2)\right)\\
\hat\psi(x,y)=&\dfrac{1}{\sqrt{a^2+b^2}}\left(b(2xy-ax-by)+a(x^2-y^2)\right)
\end{align*}
Then:
\[
\dfrac{\partial \hat\varphi}{\partial y}(x,y)\equiv \dfrac{\partial \hat\psi}{\partial x}(x,y)
\]
and therefore the surface $\hat S$ is Lagrangean.
\end{ex}

\begin{ex}\label{notsuf}
Consider a surface $S$ such the image of $\Gamma_2$ is the curve:
\[
b_1=c, \quad 0<|c|<1
\]
which is not contained in any great circle.
This condition above is equivalent to:
\[
1+\phi_x\psi_y-\phi_y\psi_x=c\sqrt{1+\phi_x^2+\phi_y^2+\psi_x^2+\psi_y^2+(\phi_x\psi_y-\phi_y\psi_x)^2}
\]

If we fix:
\[
\psi(x,y)=\dfrac{1}{2}(x^2+y^2)
\]
we obtain a first order partial differential equation for $\phi$:
\[
1-y\phi_x+x\phi_y=c\sqrt{1+\phi_x^2+\phi_y^2+x^2+y^2+(y\phi_x-x\phi_y)^2}
\]

We consider the initial condition:
$
\phi(x,0)=-x^2/2
$, and therefore $\phi_x(x,0)=-x$; then $\phi_y(x,0)$ has to satisfy:
\[
1+x\phi_y(x,0)=c\sqrt{1+2x^2+\phi_y(x,0)^2+x^2\phi_y(x,0)^2}
\]

If we choose $c=1/\sqrt{2}$ then at the origin:
\[
2=1+\phi_y(0,0)^2,\quad\hbox{or }\phi_y(0,0)=\pm 1
\]
and it follows from the implicit function theorem that there exists a function $h(x)$ with $h(0)=-1$ such that:
\[
1+xh(x)=c\sqrt{1+2x^2+h(x)^2+x^2h(x)^2}
\]
Then $\phi_y(x,0)=h(x)$ satisfies the required compatibility condition for the initial condition.

We can integrate the partial differential equation in order to obtain $\phi(x,y)$ around the origin, with the above initial condition and choice of $c$, if the origin is a non characteristic point; as the characteristic vector field is:
\[
\dot x =\dfrac{\partial F}{\partial p},\quad 
\dot y =\dfrac{\partial F}{\partial q},\quad \dot  p =-\dfrac{\partial F}{\partial x},\quad \dot q =-\dfrac{\partial F}{\partial y}
\]
where:
\[
p=\phi_x,\quad q=\phi_y
\]
and
\[
F(x,y,p,q)=1-yp+xq-c\sqrt{1+p^2+q^2+x^2+y^2+(yp-xq)^2}
\]
the required transversality condition is satisfied:
\[
\dfrac{\partial F}{\partial q}(0,0,0,-1)=-c\dfrac{-1}{\sqrt{1+(-1)^2}}=\dfrac{1}{2}\ne 0
\]

The Gauss map of the surface $S$ just constructed is non degenerate around the origin, as $\Gamma_1$ is an immersion:
\[
\Gamma_1=\dfrac{1}{\sqrt{1+\phi_x^2+\phi_y^2+x^2+y^2+(y\phi_x-x\phi_y)^2}}(1+y\phi_x-x\phi_y,-\phi_x+y,-\phi_y-x)
\]
and at the origin $\Gamma_1=(\sqrt{2}/2,0,-\sqrt{2}/2)$ and:
\[
\dfrac{\partial}{\partial x}\Gamma_1=(\sqrt{2},\sqrt{2}/2,-\sqrt{2}),\quad \dfrac{\partial}{\partial y}\Gamma_1=(0,-\sqrt{2}/2,0)
\]
taking in account that:
\[
\phi_{xx}(0,0)=-1, \quad \phi_{xy}(0,0)=h'(0)=2,\quad \phi_{yy}(0,0)=0
\]
The value of $\phi_{yy}(0,0)$ can be computed from the fact that the characteristic vector field at $(0,0,0,-1)$ is:
\[
(F_p,F_q,-F_x,-F_y)|_{(0,0,0,-1)}=\left(0,\dfrac{1}{2},1,0\right)
\]

The surface $S$ has Gauss curvature and normal curvature that coincide,  as $0=\Gamma_2^* \sigma =(K-\kappa)\,\omega_1\wedge \omega_2$, and therefore is an isoclinic surface, but  it is not congruent to a Lagrangean surface as neither the image of $\Gamma_1$ nor the image of $\Gamma_2$ is contained in a great circle; this seems to contradict Theorem 1 in \cite{am2}.
\end{ex}



\begin{thebibliography}{11}
\bibitem{am1}
Y. Aminov, \emph{Surfaces in $E^4$ with a Gaussian torsion of constant sign},  J. of Math. Sci. 54 (1991), 667-675
\bibitem{am2}
Y. Aminov, \emph{Surfaces in $E^4$ with a Gaussian curvature coinciding with a Gaussian torsion up to the sign}, Math. Notes 56 (1994), 1211-1215
\bibitem{bla}
W. Blaschke, \emph{Sulla geometria differenziale delle superficie $S_2$ nello spazio euclideo $S_4$}, Ann. Mat. Pura ed Appl. 28 (1949), 205-209
\bibitem{rdcc}
R. Garcia, D. Mochida, M.C. Romero-Fuster and M.A.S. Ruas, \emph{Inflection points and topology of surfaces in 4-space}, Trans. Amer. Math. Soc. 352 (2000), 
3029-3043.
\bibitem{ho1}
D. Hoffman, R. Osserman, \emph{The Gauss map of surfaces in $\Rn$}, J. Diff. Geom. 18 (1983), 733-754
\bibitem{ho2}
D. Hoffman, R. Osserman, \emph{The Gauss map of surfaces in $\mathbf R^3$ and $\Rq$}, Proc. London Math. Soc. 50 (1985), 27-56
\bibitem{kom}
K. Kommerell, \emph{Riemannsehe Fl\"achen in ebenen Raum von vier Dimensioncn},  Math. Ann. 60 (1905), 546-596.
\bibitem {little}
J. Little, \emph{On singularities of submanifolds of higher dimensional Euclidean spaces}, Ann. Mat. Pura ed Appl. 83 (1969), 261-335.
\bibitem{dcc}
D. Mochida, M.  C. Romero-Fuster and M. A. S. Ruas, \emph{The geometry of surfaces in 4-space
from a contact viewpoint}, Geometri\ae\  Dedicata 54(1995), 323-332.
\bibitem{mond}
D. Mond, \emph{On the classification of germs of maps from $\mathbb R^2$ to $\mathbb R^3$}, Proc. LMS 50 (1985), 333-369
\bibitem{weiner}
J. Weiner, \emph{The Gauss map for surfaces in 4-space}, Math. Ann. 269 (1984), 541-560
\bibitem{w}
Y.-C. Wong, \emph{A new curvature theory for surfaces in a Euclidean 4-space}, 
Comm. Math. Helv.  26 (1952), 
152-170. 


\end{thebibliography}
\end{document}